\newtheorem{definition}{Definition}
\newtheorem{theorem}{Theorem}
\newtheorem{lemma}{Lemma}
\newtheorem{corollary}{Corollary}
\begin{document}

\title{Conjecture on Supersequence Lower Bound related to Connell Sequence}
\author{Oliver Tan}
\date{December 2024}
\maketitle

\begin{abstract}
This paper proves the minimum size of a supersequence over a set of eight elements is $52$. This disproves a conjecture that the lower bound of the supersequence is the partial sum of the geometric Connell sequence. By studying the internal distribution of individual elements within substrings of the supersequence called segments, the proof provides important results on the internal structure that could help to understand the general lower bound problem for finite sets. 
\end{abstract}

A supersequence over a finite set is a sequence that contains as subsequence all permutations of the set. 
For example, $\langle 1,2,3,1,2,1,3 \rangle$ is a supersequence over $\{1,2,3\}$. It also happens to be the shortest supersequence over the set.

Starting with the arithmetic sequence $\langle 1,2,3,4,5... \rangle$, the arithmetic Connell sequence is constructed by alternatively taking a number of the next smallest odd or even integers. The number of integers taken is based on the arithmetic sequence. So, the arithmetic Connell sequence starts with one odd integer $1$, and then two even integers $2$, $4$, and then three odd integers $5$, $7$, $9$, and then the next four even integers $10$, $12$, $14$, $16$ and so on. The geometric Connell sequence starts with the geometric sequence $\langle 1,2,4,8,16... \rangle$. The first few integers are given by $\langle 1, 2, 4, 5, 7, 9, 11, 12, 14, 16, 18, 20, 22, 24, 26 \rangle$.

When the concept of supersequences was studied, Newey \cite{Newey} proposed two conjectures as the possible theoretical lower bound length. The first is based on the length of the shortest known supersequence at that time, namely $n^2-2n+4$. The second conjecture is defined by recurrence formulas, which happens to be the partial sums of the geometric Connell sequence. The shortest supersequences of the first seven smallest finite sets are as predicted by both conjectures. In 2011, however, the first conjecture was disproved by Zalinescu \cite{Zalinescu} and Radomirovic's \cite{Radomirovic} results. This paper proves the second conjecture was false too, and the counterexample is the supersequence over a set of eight letters. We prove that the minimum length is $52$. In contrast, the partial sum of the first eight elements of the geometric Connell sequence is $51$.

We utilize new abstraction tools like segmentation and reverse segmentation to divide a supersequence into manageable pieces. Based on the order of appearance of the letters from the finite set in the supersequence, each letter is given an identifier from $1$ to $8$. By defining a removal operation that deletes elements from a supersequence, we obtain the shortest supersequences over smaller sets where properties are known. From there, we derive rich information about individual letters and the number of occurrences, or frequencies of the letters within each piece of segment.

The general lower bound problem was studied in a 1975 paper by Kleitman-Kwiatkowski \cite{Kleitman} which claims that a supersequence over a set of size $n$ must have a minimum length of $n^2- C_{\epsilon} n^{7/4+\epsilon}$ for any $\epsilon > 0$, where $C_{\epsilon}$ is a constant depending on $\epsilon$. This result is clarified and improved by Plaxton-Tan \cite{Plaxton} in 2025 to a lower bound of $n^2 - Cn^{55/32}$, without the $\epsilon$.

For the upper bound, in 2022, Tan \cite{Tan} constructs the current shortest supersequence over a set of $n$ letters with length $\lceil n^2 - (\frac52 - \epsilon)n + C_\epsilon \rceil$. This improves on results by Radomirovic \cite{Radomirovic} of length $n^2 - \frac73n + \frac{19}3$, Zalinescu \cite{Zalinescu} of length $n^2 - 2m + 3$, and Newey \cite{Newey}, Adleman \cite{Adleman}, Koutas-Hu \cite{Koutas}, Galbiati \cite{Galbiati}, Mohanty \cite{Mohanty} of length $n^2 - 2m + 4$. It is still an open question what the optimal length of a supersequence over a set of $n$ letters is.

\section{Notations and Basic Results}

We follow most of the notations from Tan \cite{Tan}. When we say $\sigma $ is a sequence over a set $A$, it means that all elements of $\sigma $ are letters from the set $A$. The terminology distinguishes between an element and a letter. A member of a sequence is called an element, whereas a member of a set $A$ is called a letter. For example, given a sequence $\langle 1, 2, 4, 1, 3, 4\rangle$ over a set $A = \{1, 2, 3, 4\}$ and a subset $B = \{1, 2, 3\}$, we say the sequence has four elements and three letters from $B$. If $\sigma $ is a sequence, then $|\sigma|$ denotes the length of the sequence. The notation $f(\sigma, x)$ is called the frequency of $x$ in $\sigma$ and represents the number of occurrences of the letter $x$ within the sequence $\sigma$. So, for example, $f(\sigma, 1) = 2$ and $f(\sigma, 3) = 3$ where $\sigma = \langle 1, 2, 3, 1, 3, 4, 3 \rangle$. We also use multiple letters or a set of letters in the notation to represent the total number of occurrences of all of those letters, which means $f(\sigma, x_1, x_2, ..., x_i) = f(\sigma,  x_1) +f(\sigma,  x_2) + ... f(\sigma, x_i)$, or $f(\sigma, \{x_1, x_2, ..., x_i\}) = f(\sigma,  x_1) +f(\sigma,  x_2) + ... f(\sigma, x_i)$.

For any $i$ where $1 \le i \le |\sigma|$, the notation $\sigma [i]$ denotes the $i$th element of $\sigma $. Often, we abuse the notation to use a sequence $\sigma$ to represent the set of all letters occur in the sequence, i.e. $\sigma = \{a: a = \sigma[i]$ for some integer $i$ such that $1 \le i \le |\sigma|\}$. Accordingly, we can use set relation involving a sequence. For example, an element $a$ of $\sigma $ is denoted by $a \in \sigma $. The set relation $\sigma \subset A$ means $\sigma$ is a sequence over $A$.

The position $i$, where the element $a$ occurs in the sequence $\sigma $, is denoted by $\sigma ^{-1}[a]$. This definition is valid if $a$ occurs only once in the sequence $\sigma $. So when the definition is valid, we always have $\sigma [\sigma ^{-1}[a]] = a$. The last element, second last element and so on of $\sigma $ are denoted by $\sigma [-1]$, $\sigma [-2]$ and so on, respectively. Given two integers $i$ and $j$, let $\sigma [i,j]$ represent the substring of $\sigma $ starting at position $i$ and ending at $j$, inclusively for both ends. For example, if $\sigma = \langle 1,2,3,4,5,6 \rangle $, then $\sigma [3,-2] = \langle 3,4,5\rangle $ represents the substring from the third element to the second last element of $\sigma $. For any $i$ where $1 \le i \le |\sigma|$, the shorter notation $\sigma | i$ is also used to represent $\sigma [1,i]$ to denote the substring of $\sigma $ starting from $1$ and ending at $i$ inclusive at both ends.

Given a list of sequences $\sigma _1, \sigma _2, \sigma _3,...$, and letters $a_1, a_2, a_3,...$, then $\sigma _1 \sigma _2 \sigma _3...$ denotes the concatenation of those sequences $\sigma _1, \sigma _2, \sigma _3,...$, and $\sigma _1 a_1 \sigma _2 a_2 \sigma _3 a_3...$ denotes the concatenation of those sequences $\sigma _1, \sigma _2, \sigma _3,...$ interposed with those letters $a_1, a_2, a_3,...$. We may optionally write a dot $\cdot $ in between a sequence and an element, or between two sequences, if it makes reading easier, like $\sigma _1 \cdot a_1 \cdot \sigma _2 \cdot a_2 \cdot \sigma _3 \cdot a_3...$.

A $k$-\textit{perm} is a sequence of length $k$ where all elements of the sequence are distinct. For any integer $k$ less than the size of $A$, a sequence $\sigma $ is said to be \textit{$k$-complete over $A$} or simply \textit{$k$-complete}, if all the $k$-perms are subsequences of $\sigma$.


\section{Segmentation}

\begin{definition} \label{def1}
\textnormal {Let $\sigma_1, \sigma_2, ..., \sigma_n$ be a list of sequences over a set $A$ of size $n$. Suppose $k$ is an integer such that $1\le k \le n$, a $k$-perm $\rho$ over $A$ is said to be \textit{generated at $k$} if $k$ is the smallest integer such that $\rho$ is a subsequence of $\sigma_1 \cdot \sigma_2 \cdot ...\cdot \sigma_k$. More specifically, $\rho$ is said to be \textit{generated at $\sigma_k[i]$} for some integer $i \le |\sigma_k|$ if $\rho$ is a subsequence of $\sigma_1 \cdot \sigma_2 \cdot ...\cdot \sigma_k | i$, but not a subsequence of $\sigma_1 \cdot \sigma_2 \cdot ...\cdot \sigma_k | j$ for any integer $ 0 \le j < i$.}
\end{definition}

\begin{definition} \label{def2}
\textnormal {Suppose $\sigma$ is a supersequence over a set $A$ of size $n$. Let $\sigma_1, \sigma_2, ..., \sigma_n$ be a list of substrings of $\sigma$. The list is said to be a \textit{segmentation} of $\sigma$ if for any integer $k$ such that $1\le k \le n$, $\sigma_1 \sigma_2... \sigma_k$ is the minimum initial substring of $\sigma$ which is $k$-complete. In other words, $\sigma_1 \sigma_2... \sigma_k = \sigma | i$ for some integer $i$, and there is no integer $j < i$ such that $\sigma | j$ is $k$-complete. Each of the $\sigma_k$ is called a \textit{segment}. For any integers $i, j$ such that  $1 \le i < j \le n$, we use $\sigma_{i...j}$ to denote the concatenation $\sigma_i \cdot \sigma_{i+1} \cdot ... \cdot \sigma_{j-1} \cdot \sigma_j$}.
\end{definition}

It is not difficult to see that the segmentation of a supersequence is unique due to the minimum requirement.  For the rest of this paper, we assume $\sigma_1, \sigma_2, ..., \sigma_n$ is the segmentation of a supersequence $\sigma$ over a set $A$.

The reverse sequence of a supersequence is also a supersequence over the same set. Similar definitions can be designed to capture the reverse supersequence to help count the occurrences of letters.

\begin{definition} \label{def3}
\textnormal {Suppose $\sigma$ is a supersequence over a set $A$ of size $n$. Let $\rho_n, \rho_{n-1}, ..., \rho_1$ be a list of substrings of $\sigma$. The list is said to be a \textit{reverse segmentation} of $\sigma$ if for any integer $k$ such that $1\le k \le n$, $\rho_k \cdot ... \cdot \rho_2 \cdot \rho_1$ is the minimum final substring of $\sigma$ which is $k$-complete. In other words, $\rho_k \cdot ... \cdot \rho_2 \cdot \rho_1 = \sigma[i,-1]$ for some integer $i$, and there is no integer $j > i$ such that $\sigma[j,-1]$ is $k$-complete. Each of the $\rho_k$ is called a \textit{reverse segment}.}.
\end{definition}

The useful point to note at this time is that any result proven for segmentation equally holds true for the reverse segmentation with respect to the reverse of the supersequence.

\begin{lemma} \label{lem1} For all integers $k$ such that $1 \le k \le n$, $\sigma_k$ is not empty. 
\end{lemma}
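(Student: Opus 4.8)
The plan is to show that the minimal $k$-complete prefix of $\sigma$ is strictly longer than the minimal $(k-1)$-complete prefix, since $\sigma_k$ is by definition exactly the block of elements sitting between these two prefixes. Write $i_k$ for the length of $\sigma_{1\ldots k}$, with the convention $i_0 = 0$, so that $\sigma_{1\ldots k} = \sigma | i_k$, $\sigma_k = \sigma[i_{k-1}+1, i_k]$, and the lemma becomes the strict inequality $i_{k-1} < i_k$ for every $k$ with $1 \le k \le n$.

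First I would record the monotonicity $i_{k-1} \le i_k$, which also guarantees that $\sigma_k$ is a well-formed (a priori possibly empty) substring. This rests on the observation that a $k$-complete sequence is automatically $(k-1)$-complete: any $(k-1)$-perm $\rho$ extends to a $k$-perm $\rho \cdot b$ by appending a letter $b \in A$ not occurring in $\rho$, which exists because $k-1 < n$; and if $\rho \cdot b$ embeds in a sequence $\tau$, then its prefix $\rho$ embeds in $\tau$ as well. Hence the minimal $k$-complete prefix is at least as long as the minimal $(k-1)$-complete prefix, giving $i_{k-1} \le i_k$.

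The heart of the argument is to rule out equality by proving that $\sigma | i_{k-1}$ fails to be $k$-complete. By minimality of $i_{k-1}$, the prefix $\sigma|i_{k-1}$ is $(k-1)$-complete but $\sigma|(i_{k-1}-1)$ is not, so by Definition \ref{def1} there is a $(k-1)$-perm $\rho$ generated at $\sigma[i_{k-1}]$. I would then argue that every embedding of $\rho$ into $\sigma|i_{k-1}$ must use position $i_{k-1}$ as the image of its last letter: an embedding ending at some $p \le i_{k-1}-1$ would lie entirely inside $\sigma|(i_{k-1}-1)$, contradicting that $\rho$ is not a subsequence of $\sigma|(i_{k-1}-1)$. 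Now pick a letter $b \in A$ not occurring in $\rho$ (available since $k \le n$) and consider the $k$-perm $\rho \cdot b$. Any embedding of $\rho \cdot b$ in $\sigma|i_{k-1}$ would have to place $b$ at a position strictly exceeding $i_{k-1}$, which does not exist in $\sigma|i_{k-1}$. Thus $\rho \cdot b$ is not a subsequence of $\sigma|i_{k-1}$, so $\sigma|i_{k-1}$ is not $k$-complete, forcing $i_k > i_{k-1}$ and hence $\sigma_k$ nonempty.

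I expect the main obstacle to be the precise bookkeeping of the ``generated at'' position together with the boundary case $k=1$. For $k=1$ the role of $\rho$ is played by the empty $0$-perm with $i_0 = 0$, and the claim reduces to noting that the empty prefix $\sigma|0$ contains no letter and so cannot be $1$-complete; the same skeleton covers it once the conventions are fixed. The genuinely delicate step is justifying that the last element $\sigma[i_{k-1}]$ is indispensable for $\rho$, i.e. that no embedding of $\rho$ can end earlier, which is exactly what the minimality in Definition \ref{def2} supplies and what leaves the appended letter $b$ with nowhere to go.
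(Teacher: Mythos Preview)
Your proof is correct and follows essentially the same route as the paper: both arguments locate a $(k-1)$-perm $\rho$ whose every embedding into $\sigma_{1\ldots k-1}$ is forced to end at the very last position, and then extend it by an unused letter $b$ to obtain a $k$-perm $\rho\cdot b$ that cannot fit into $\sigma_{1\ldots k-1}$. Your phrasing via the lengths $i_{k-1}<i_k$ and Definition~\ref{def1} is slightly more streamlined than the paper's, which instead names the last element $a=\sigma_{k-1}[-1]$ explicitly and argues by a small case split (perms ending in $a$ versus not) that some $\rho$ with last letter $a$ fails to embed once $a$ is deleted; the logical content is identical.
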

\begin{proof} 
It is obvious that $\sigma_1$ is not empty. Suppose $k > 1$ is the smallest integer such that  $\sigma_k$ is empty. Let $a \in A$ be the last element of $\sigma_{k-1}$. Let $\sigma^{(1)}$ denote the sequence $\sigma_1 \cdot \sigma_2 \cdot ...\cdot\sigma_{k-1} [1,-2]$. In other words, $\sigma^{(1)}$ is the concatenation of $\sigma_1, \sigma_2, ..., \sigma_{k-1}$ with its last element removed.\\

\noindent  \textbf{Claim 1:} There exists a $(k-1)$-perm, with last element $a$, which is not a subsequence of $\sigma^{(1)}$.

If not, then every $(k-1)$-perm with last element $a$ is a subsequence of $\sigma^{(1)}$. By definition of $\sigma_{k-1}$, every $(k-1)$-perm with last element that is not $a$ is also a subsequence of $\sigma^{(1)}$.  Therefore, every $(k-1)$-perm is a subsequence of $\sigma^{(1)}$. This means that $\sigma^{(1)}$ is $(k-1)$-complete, which contradicts the assumption that $\sigma_1 \cdot \sigma_2 \cdot ...\cdot\sigma_{k-1}$ is the minimum substring of $\sigma$ that is $(k-1)$-complete.

Let $\rho$ be the $(k-1)$-perm given by Claim 1. Let $b$ be a letter of $A$ which does not occur in $\rho$. Then $\rho \cdot b$ is a $k$-perm, and yet it is not a subsequence of $\sigma_1 \cdot \sigma_2 \cdot ...\cdot\sigma_{k-1}$, contradicting the assumption that it is $k$-complete.
\end{proof}

The following lemma shows that the last element of any segment is a letter of $A$ that occurs only once in that segment.

\begin{lemma} \label{lem2}
Given any integer $k$ such that $1 \le k \le n$, if $a=\sigma_k[-1]$ is the last element in $\sigma_k$, then the letter $a$ has not occurred before $\sigma_k[-1]$ in $\sigma_k$.
\end{lemma}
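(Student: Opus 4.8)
The plan is to argue by contradiction, exploiting the minimality built into the definition of a segment together with the $(k-1)$-completeness of the preceding prefix. First I would fix notation: let $i$ be the position in $\sigma$ at which $\sigma_k$ ends, so that $\sigma_k[-1] = \sigma[i] = a$ and $\sigma_{1 \ldots k} = \sigma | i$, and let $m$ be the position where $\sigma_{k-1}$ ends (with $m = 0$ when $k = 1$), so that $\sigma_{1 \ldots k-1} = \sigma | m$.

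The first substantive step is to locate a $k$-perm that is generated exactly at $\sigma[i]$. By Definition \ref{def2}, $\sigma | i$ is $k$-complete while $\sigma | (i-1)$ is not; hence there is at least one $k$-perm $\rho$ that is a subsequence of $\sigma | i$ but not of $\sigma | (i-1)$, i.e. $\rho$ is generated at $\sigma[i]$ in the sense of Definition \ref{def1}. The next step, which I expect to be the most delicate part of the argument, is to show that the final letter of $\rho$ must be $a$. The reasoning is that any order-preserving injective embedding of $\rho$ into $\sigma | i$ must use position $i$ for some letter of $\rho$ --- otherwise it would already embed $\rho$ into $\sigma | (i-1)$ --- and since position $i$ is the largest position available, order-preservation forces it to be the image of the last letter of $\rho$. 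Thus I may write $\rho = \rho' \cdot a$, where $\rho'$ is a $(k-1)$-perm not containing $a$.

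With this decomposition in hand, the contradiction falls out quickly. Since $\sigma | m = \sigma_{1 \ldots k-1}$ is $(k-1)$-complete (the empty prefix when $k=1$ trivially contains the empty $0$-perm), the $(k-1)$-perm $\rho'$ is already a subsequence of $\sigma | m$. Now suppose, for contradiction, that $a$ occurs in $\sigma_k$ strictly before its last position; say $a = \sigma[j]$ with $m < j < i$. Because $m \le j - 1$, the prefix $\sigma | m$ is contained in $\sigma | (j-1)$, so $\rho'$ is a subsequence of $\sigma | (j-1)$, and appending $\sigma[j] = a$ shows that $\rho = \rho' \cdot a$ is a subsequence of $\sigma | j$. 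But $j < i$ then contradicts the fact that $\rho$ is not a subsequence of $\sigma | (i-1)$. Hence $a$ cannot appear before the last position of $\sigma_k$, which is exactly the claim. Notably, the only place the identity of $\sigma_{k-1}$ enters is through the $(k-1)$-completeness of $\sigma | m$, so the structure of this argument parallels the one used in Lemma \ref{lem1}.
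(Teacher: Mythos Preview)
Your proof is correct and follows essentially the same route as the paper's: both exploit the $(k-1)$-completeness of $\sigma_{1\ldots k-1}$ together with an earlier occurrence of $a$ in $\sigma_k$ to contradict the minimality of $\sigma_{1\ldots k}$. The only cosmetic difference is that you isolate a single $k$-perm $\rho$ generated at the final position and show it must end in $a$, whereas the paper phrases the same observation as ``every $k$-perm ending in $a$ is already a subsequence of $\sigma_{1\ldots k-1}\cdot\sigma_k|i$, so $\sigma_k[-1]$ is not needed''; your version is arguably more explicit about why the last letter must be $a$.
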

\begin{proof} 
If $k=1$, then by definition, $\sigma_1[-1]$ is the last letter of $A$ that has its first occurrence in $\sigma_1$. So, $\sigma_1[-1]$ cannot also occur before the last element of $\sigma_1$. Suppose $k > 1$ and assume the letter $a=\sigma_k[-1]$ occurs also in $\sigma_k[i]$ for some $i < |\sigma_k|$. By definition of $\sigma_{k-1}$, any $(k-1)$-perm is a subsequence of $\sigma_1 \cdot \sigma_2 \cdot ...\cdot\sigma_{k-1}$.  Therefore, any  $k$-perm ending in the letter $a$ must be a subsequence of $\sigma_1 \cdot \sigma_2 \cdot ...\cdot\sigma_{k-1}\cdot \sigma_k |i$. This means that $a=\sigma_k[-1]$ is not needed in $\sigma_k$, since it contradicts the minimum requirement of $\sigma_k$.
\end{proof}

The order in which a letter first appears in a sequence is used to define the important concept of the last appearing letter. This concept is applied to different portions of a supersequence in later definitions to further identify individual letters.

\begin{definition} \label{def4}
\textnormal {Let $\sigma$ be a sequence over $A$. Suppose $B$ is a subset of $A$. The \textit{last appearing letter} of $B$ in $\sigma$ is a letter $a$ of $B$ which first appears last in $\sigma$, among all letter of $B$. In other words, $\sigma ^{-1}[a] > \sigma ^{-1}[b]$ for all $b \in B$.}
\end{definition}

\begin{definition} \label{def5}
\textnormal {Suppose $B$ is a subset of $A$. We use $L(\sigma, B)$ to denote the last appearing letter of $B$ in $\sigma$, i.e. $L(\sigma, B) = a$.}
\end{definition}

\begin{definition} \label{def6}
\textnormal {A \textit{duplicate in a segment} $\sigma_k$ is a letter $a \in \sigma_k$ which occurs more than once. In other words, there are integer positions $1 \le i < j \le |\sigma_k|$ such that $a = \sigma_k[i] = \sigma_k[j]$.}
\end{definition}

The definition of $L(\sigma, B)$ is valid even if there is a duplicate letter in $\sigma$. For example, if $\sigma = \langle 1, 2, 3, 4, 5, 4, 1, 3 \rangle$ and $B = \{1, 2, 4\}$, then $L(\sigma, B) = 4$. 

In the following definitions, we devise two different ways to label letters. The first way is to use bolded numbers to represent the last appearing letter within the supersequence. The second way is to define the last appearing letter of each segment as the terminal letter.

\begin{definition} \label{def7}
\textnormal {Given a sequence $\sigma$ over a set $A$ of size $n$, we use bolded numeric $\mathbf n$ to represent the last appearing letter $L(\sigma, A)$. Let $p_n$ be the position of the first occurrence of $\mathbf n$ in $\sigma$. For integers $1 \le k < n$, we use numeric $\mathbf {n-k}$ to represent the last appear letter $L(\sigma[p_{n-k+1} + 1,-1], A \setminus \{\mathbf n, \mathbf {n-1}, ... ,\mathbf {n-k+1}\})$. Let $p_{n-k}$ be the position of the first occurrence of $\mathbf {n-k}$ in $\sigma[p_{n-k+1} + 1,-1]$.}\end{definition}

\noindent \textbf{Example 1} Given $A =\{a, b, c\}$ and $\sigma = \langle a, b, a, c, a, c, b, a, c \rangle$.
\begin{enumerate}
  \item $\mathbf {3} = L(\sigma, A) = c$. The position of $\mathbf {3}$, or $c$ in $\sigma$ is $p_3 = 4$.
  \item $\mathbf {2} = L(\sigma[5,-1], A\setminus \{c\}) = b$. The position of $\mathbf {2}$, or $b$ is $p_2 = 7$.
  \item $\mathbf {1} = L(\sigma[8,-1], A\setminus \{c, b\}) = a$. The position of $\mathbf {1}$, or $a$ is $p_3 = 8$.
\end{enumerate}

\begin{definition} \label{def8}
\textnormal {Given a segmentation of a supersequence $\sigma_1, \sigma_2, ..., \sigma_n$ over a set $A$ of $n$ elements, define the \textit{terminal letter} of $\sigma_1$ to be the letter $L(\sigma_1, A)$.  For any integer $1 < k \le n$, let $B$ be the set of terminal letters of $\sigma_1$, $\sigma_2$, ..., $\sigma_{k-1}$. Then the terminal letter of $\sigma_k$ is defined to be the letter $L(\sigma_k, A \setminus B)$.}\end{definition}



\noindent \textbf{Example 2} Given $\sigma_1 = \langle 1,2,3,4,5,6,7,8 \rangle$,  $\sigma_2 = \langle 1,2,3,4,5,6,7 \rangle$, $\sigma_3 = \langle 1,8,2,3,4,5,6 \rangle$, and $\sigma_4 = \langle 1,8,2,3,4,5,7 \rangle$.
\begin{enumerate}
  \item The terminal letter of $\sigma_1$ is $8$, which is also represented as $\mathbf 8$.
  \item The terminal letter of $\sigma_2$ is $7$, which is also represented as $\mathbf 7$.
  \item The terminal letter of $\sigma_3$ is $6$, which is also represented as $\mathbf 6$.
  \item The terminal letter of $\sigma_4$ is $5$, which is also represented as $\mathbf 5$.
\end{enumerate}

\begin{definition} \label{def9}
\textnormal {The terminal letter of the first segment is also called the \textit{head terminal letter}. }
\end{definition}

The head terminal letter is always $\mathbf n$. In fact, the terminal letter of the first three segments $\sigma_i$, for $1 \le i \le 3$, is $\mathbf {n-i+1}$. This is however, not necessarily true for later segments.\\


An important technique to study occurrences of letters is to recursively reduce a supersequence to another one over a smaller set, where occurrences are known. The following is the method to do it.

\begin{definition} \label{def10}
\textnormal {Suppose $\sigma$ is a supersequence over a set $A$, and $\mathbf n = L(\sigma_1, A)$ is the head terminal letter. Define a \textit{removal operation} of $\mathbf n$ within $\sigma$ to be the removal of all elements of $\sigma_1$ and all occurrences of $\mathbf n$ within $\sigma$. The resulting sequence is denoted as $\sigma'$. }
\end{definition}

\begin{lemma} \label{lem3}
The removal operation of $\mathbf n$ within $\sigma$ produces a supersequence $\sigma'$ over $A \setminus \{\mathbf n\}$.
\end{lemma}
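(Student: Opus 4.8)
The plan is to show directly that $\sigma'$ contains every $(n-1)$-perm over $A \setminus \{\mathbf n\}$ as a subsequence; since $|A \setminus \{\mathbf n\}| = n-1$, this is exactly the assertion that $\sigma'$ is a supersequence over $A \setminus \{\mathbf n\}$. Write $\tau = \sigma[\,|\sigma_1|+1,\,-1\,]$ for the portion of $\sigma$ lying strictly after the first segment, so that $\sigma'$ is obtained from $\tau$ by deleting every occurrence of $\mathbf n$. The first thing I would record is a reduction: a target perm $\pi$ over $A \setminus \{\mathbf n\}$ never uses the letter $\mathbf n$, so any embedding of $\pi$ into $\tau$ lands only on positions that survive the deletion, with their relative order preserved. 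Hence $\pi$ is a subsequence of $\sigma'$ whenever it is a subsequence of $\tau$, and the lemma reduces to proving that $\tau$ is $(n-1)$-complete over $A \setminus \{\mathbf n\}$.

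Next I would pin down the location of $\mathbf n$. Because $\sigma_1$ is the minimum initial substring that is $1$-complete, every letter of $A$ occurs in $\sigma_1$, and by minimality the last element $\sigma_1[-1]$ must be the first occurrence of the latest-appearing letter; by Lemma \ref{lem2} that letter occurs only once in $\sigma_1$. Thus $\mathbf n = L(\sigma_1, A) = \sigma_1[-1]$, and since $\sigma_1$ is a prefix of $\sigma$, the very first occurrence of $\mathbf n$ anywhere in $\sigma$ sits at position $|\sigma_1|$; every occurrence of $\mathbf n$ in $\sigma$ is therefore at a position $\ge |\sigma_1|$.

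The main step is then a prepend-and-embed argument. Given any $(n-1)$-perm $\pi = \langle a_1, \ldots, a_{n-1}\rangle$ over $A \setminus \{\mathbf n\}$, form the $n$-perm $\rho = \mathbf n \cdot a_1 \cdots a_{n-1}$. Since $\sigma$ is a supersequence over $A$, $\rho$ embeds into $\sigma$, and in any such embedding the leading $\mathbf n$ is matched to an occurrence of $\mathbf n$, hence to a position $p \ge |\sigma_1|$ by the previous paragraph. Consequently $a_1, \ldots, a_{n-1}$ are matched at positions $> p \ge |\sigma_1|$, i.e. entirely within $\tau$, so $\pi$ is a subsequence of $\tau$ and therefore of $\sigma'$. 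As $\pi$ was arbitrary, $\sigma'$ is $(n-1)$-complete over $A \setminus \{\mathbf n\}$.

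I expect the only delicate point to be the bookkeeping that forces the matched positions of $a_1, \ldots, a_{n-1}$ to avoid $\sigma_1$, and this rests entirely on the fact that $\mathbf n$ makes its debut precisely at the end of $\sigma_1$: prepending $\mathbf n$ pushes the remainder of the perm past $\sigma_1$. A mild subtlety worth stating explicitly is that the removal operation also discards occurrences of letters other than $\mathbf n$ that happen to lie inside $\sigma_1$, but this is harmless, since those positions are before $|\sigma_1|$ and are never used by the embedding constructed above.
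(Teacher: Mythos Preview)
Your proof is correct and uses essentially the same idea as the paper: both arguments hinge on prepending $\mathbf n$ to an arbitrary $(n-1)$-perm over $A\setminus\{\mathbf n\}$ and observing that, since $\mathbf n$ first appears at $\sigma_1[-1]$, the remainder of the embedding must land strictly after $\sigma_1$ and hence survives in $\sigma'$. The paper phrases this in a one-line contrapositive while you spell out the direct version with the explicit bookkeeping, but the route is the same.
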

\begin{proof} 
If not, then let $\alpha$ be a permutation of  $A \setminus \{\mathbf n\}$ such that $\alpha$ is not a subsequence of $\sigma'$. Then the permutation $\mathbf n \cdot \alpha$ of $A$ is not a subsequence of $\sigma$, which is a contradiction. \end{proof}

\section{Results from Smaller Sets}

The following result by Newey \cite{Newey} will be needed when we apply the removal operation of the head terminal letter to obtain information about the minimum occurrences of each letter in a segment.

\begin{theorem} \label{th1}
The minimum length of a supersequence over a set of three, four, five, six and seven are $7$, $12$, $19$, $28$, $39$ respectively.
\end{theorem}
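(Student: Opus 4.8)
The statement bundles five numerical claims, so write $M(m)$ for the minimum length of a supersequence over a set of $m$ letters; the assertion is $M(3)=7$, $M(4)=12$, $M(5)=19$, $M(6)=28$, $M(7)=39$, i.e. $M(m)=m^2-2m+4$ for $3\le m\le 7$. Each value needs a matching upper and lower bound. The upper bound is the easy direction: for each $m$ I would exhibit an explicit sequence of length $m^2-2m+4$ and check that every permutation of the $m$ letters embeds into it as a subsequence. For $m=3$ the witness $\langle 1,2,3,1,2,1,3\rangle$ from the introduction already does this, and for the remaining sizes the standard interleaved constructions referenced in the introduction (Newey, Adleman, and others) supply sequences of exactly this length; verifying the subsequence property for a fixed small $m$ is a finite, mechanical check.

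All the difficulty is in the lower bounds, and here I would argue recursively using the removal operation of Definition \ref{def10}. Let $\sigma$ be an optimal supersequence over a set of size $m$ with segmentation $\sigma_1,\dots,\sigma_m$ and head terminal letter $\mathbf m=L(\sigma_1,A)$. Deleting $\sigma_1$ together with every later occurrence of $\mathbf m$ produces, by Lemma \ref{lem3}, a supersequence $\sigma'$ over a set of size $m-1$, so $|\sigma'|\ge M(m-1)$ by induction. Since $\mathbf m$ occurs exactly once inside $\sigma_1$ (its first occurrence is the last element of $\sigma_1$, by the minimality of $\sigma_1$ together with Lemma \ref{lem2}), the number of deleted elements equals $|\sigma_1|+f(\sigma,\mathbf m)-1$, whence $|\sigma|\ge M(m-1)+|\sigma_1|+f(\sigma,\mathbf m)-1$. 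Because $\sigma_1$ is $1$-complete it contains all $m$ letters, so $|\sigma_1|\ge m$. Thus if one can show $f(\sigma,\mathbf m)\ge m-2$ for every supersequence over a set of $m\ge 4$ letters, the recurrence $M(m)\ge M(m-1)+(2m-3)$ follows, and together with the base value $M(3)=7$ this yields $M(m)\ge m^2-2m+4$ throughout the range.

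The main obstacle is precisely this frequency estimate $f(\sigma,\mathbf m)\ge m-2$ for the head terminal letter. It is a genuine structural statement rather than a formality: $\mathbf m$ is the last letter to make its first appearance, so a priori it has little room and might be expected to occur few times, and controlling its multiplicity is exactly the sort of internal frequency information that the segmentation and reverse-segmentation machinery is built to extract. I would attack it by tracking, for each split position $j$, which occurrence of $\mathbf m$ can serve the permutations placing $\mathbf m$ in position $j$, and arguing that too few occurrences leave some permutation unembeddable. The base case $M(3)=7$, including the lower bound ruling out any length-$6$ supersequence, I would settle by a short direct case analysis. As a safeguard for these five specific small sizes, the lower bounds can in any event be certified by an exhaustive backtracking search over candidate sequences, pruned aggressively by Lemmas \ref{lem1} and \ref{lem2}, which is essentially how the values were originally obtained.
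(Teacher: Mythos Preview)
The paper does not supply its own proof of Theorem~\ref{th1}; it quotes the result from Newey~\cite{Newey} and uses it as a black box. So there is no in-paper argument to compare your proposal against, and your fallback of certifying the five small values by exhaustive search is in effect what the cited reference did.

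Your principal line of argument, however, contains a step that fails outright. The inequality $|\sigma|\ge M(m-1)+|\sigma_1|+f(\sigma,\mathbf m)-1$ coming from the removal operation is correct, but the frequency hypothesis $f(\sigma,\mathbf m)\ge m-2$ that you isolate as ``the main obstacle'' is not merely unproved---it is false at $m=7$. Example~4 of this very paper exhibits a minimum-length supersequence over seven letters whose head terminal letter is $\mathbf 7=7$ and in which $f(\sigma,7)=4<5=m-2$; plugging $|\sigma_1|=7$ and $f(\sigma,\mathbf 7)=4$ into your recurrence yields only $M(7)\ge 28+7+4-1=38$, one short of the target $39$. Lemma~\ref{lem5} records the frequency profile $7,6,6,6,5,5,4$ as admissible for $m=7$, and indeed the whole thrust of the present paper is that the analogous head-terminal frequency for $m=8$ is again only $m-3$, not $m-2$. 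So the tight lower bound for $m=7$ cannot come from a single removal of the head terminal letter; Newey's determination of $M(7)=39$ requires further case analysis beyond this recursion, and the exhaustive search you mention as a ``safeguard'' is not optional but carries the actual weight of the last case.
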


Some preliminary results about the length of the initial segments.

\begin{lemma} \label{lem4}
The minimum lengths of the first four segments of a minimum-sized supersequence over a set of $n$ letters are given by:
\begin{enumerate}
  \item $|\sigma_1| \ge n, |\sigma_2| \ge n-1$.
  \item If $n \in \sigma_2$, then  $|\sigma_2| \ge n$ and $|\sigma_3| \ge n-2.$ Otherwise $|\sigma_3| \ge n-1$.
  \item If $n-1 \in \sigma_3$, then $|\sigma_3| \ge n-1$ or $|\sigma_3| \ge n$, and $|\sigma_4| \ge n-3.$ Otherwise $|\sigma_4| \ge n-2$.
\end{enumerate} 
  Consequently, the minimum length of the substring of the first four segments is $|\sigma_{1...4}| \ge 4n-4$.

\end{lemma}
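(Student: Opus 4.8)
The plan is to derive all four bounds from a single mechanism---forcing letters to appear \emph{after} the first occurrence of a designated terminal letter---and then to resolve the segment boundaries with the two stated dichotomies before summing. The two base bounds are immediate. Since $\sigma_1$ is the minimal $1$-complete prefix it must contain all $n$ letters of $A$, so $|\sigma_1|\ge n$, and its last element is the first occurrence of $\mathbf n=L(\sigma,A)$, sitting at position $p_n=|\sigma_1|$. Every $2$-perm $\langle \mathbf n,b\rangle$ with $b\neq\mathbf n$ must be a subsequence of $\sigma_1\sigma_2$, and $\mathbf n$ first occurs only at $p_n$, so each of the $n-1$ letters $b\neq\mathbf n$ must occur strictly after $p_n$, hence inside $\sigma_2$; this gives $|\sigma_2|\ge n-1$. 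The same count yields the first half of part (2): if moreover $\mathbf n\in\sigma_2$, then $\sigma_2$ holds $n$ distinct letters and $|\sigma_2|\ge n$.

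For the deeper bounds I would iterate this idea one segment at a time using the terminal letters and the positions $p_{n-1},p_{n-2},\ldots$ of Definition \ref{def7}. The key claim to establish is that the descending terminal perm $\langle \mathbf n,\mathbf{n-1},\ldots,\mathbf{n-k+2}\rangle$ is generated exactly at $p_{n-k+2}$: reading greedily, $\mathbf n$ cannot be placed before $p_n$, and by the definition of each $p$ the earliest continuation forces the realization to end at $p_{n-k+2}$. Consequently, for $\sigma_{1\ldots k}$ to be $k$-complete, every one of the $n-k+1$ letters $c\notin\{\mathbf n,\ldots,\mathbf{n-k+2}\}$ must occur after $p_{n-k+2}$; for $k=3$ this forces all $n-2$ letters $c\neq\mathbf n,\mathbf{n-1}$ past $p_{n-1}$, and for $k=4$ all $n-3$ letters past $p_{n-2}$. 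I would complement this forward count with a reappearance argument: when $\mathbf n\notin\sigma_2$ the only copy of $\mathbf n$ in $\sigma_1\sigma_2$ is at $p_n$, so any $3$-perm $\langle c,d,\mathbf n\rangle$ whose pair $(c,d)$ is not already ordered inside $\sigma_1$ forces a fresh copy of $\mathbf n$ into $\sigma_3$; hence $\mathbf n\in\sigma_3$, the extra letter that upgrades the $\sigma_3$ bound from $n-2$ to $n-1$. A reappearance of $\mathbf{n-1}$ supplies the further upgrade to $n$, and the entirely analogous statement for $\mathbf{n-1}$ and $\sigma_4$ drives the part (3) dichotomy. Here the hypothesis that $\sigma$ is \emph{minimum-sized} is what guarantees $\sigma_1[1,-2]$ is not already $2$-complete over $A\setminus\{\mathbf n\}$, so such a ``bad'' pair $(c,d)$ exists; the removal operation of Lemma \ref{lem3} can be used to cross-check these reduced counts.

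The main obstacle is that $p_{n-k+2}$ need not lie at the end of $\sigma_{k-1}$: the letters forced after it may be absorbed by the \emph{tail} of $\sigma_{k-1}$ (the elements following its terminal letter's first occurrence) rather than by $\sigma_k$. I would control this tail through the two dichotomies. When the relevant terminal letter is itself the last element of its segment---which is exactly the case when that segment meets its bound with equality, since then each of its letters appears once by Lemma \ref{lem2}---the tail is empty and the forced letters fall entirely into $\sigma_k$, giving the clean individual bound; otherwise each tail element beyond the terminal letter lengthens $\sigma_{k-1}$ by precisely the amount by which it shortens the guaranteed count for $\sigma_k$, so the loss is repaid one segment earlier. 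This trade-off is the content of the conditions $\mathbf n\in\sigma_2$ and $\mathbf{n-1}\in\sigma_3$, and it is why part (3) records a $\sigma_3$ bound that rises to $n$ precisely when $\mathbf n\notin\sigma_2$ and $\mathbf{n-1}\in\sigma_3$.

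Finally I would assemble the estimate by a four-way case split on whether $\mathbf n\in\sigma_2$ and whether $\mathbf{n-1}\in\sigma_3$. In every branch the three bounds for $\sigma_2,\sigma_3,\sigma_4$ sum to at least $3n-4$: the only way $\sigma_4$ drops to its smaller value $n-3$ is when $\mathbf{n-1}\in\sigma_3$, but that same hypothesis lifts $\sigma_3$ from $n-2$ to $n-1$ (and to $n$ when also $\mathbf n\notin\sigma_2$), exactly cancelling the loss, while $\sigma_2$ contributes $n$ or $n-1$ according to whether $\mathbf n\in\sigma_2$. Adding $|\sigma_1|\ge n$ yields $|\sigma_{1\ldots4}|\ge 4n-4$ in each branch. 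I expect the tail book-keeping of the preceding paragraph to be the only genuinely delicate part; the remainder is a disciplined repetition of the first-occurrence counting argument.
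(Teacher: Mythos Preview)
Your plan is correct and follows essentially the same route as the paper: the base bounds come from $1$- and $2$-completeness, the deeper bounds come from forcing the remaining letters to appear after the greedy realization of the descending terminal perm, and the dichotomies are resolved by the reappearance argument (the terminal letter cannot skip two consecutive segments, which the paper justifies via the removal operation against the known minima for smaller $n$, exactly as you invoke the minimum-sized hypothesis). Your explicit tail book-keeping, showing that any element of $\sigma_{k-1}$ past $p_{n-k+2}$ repays in $|\sigma_{k-1}|$ whatever it costs the guaranteed count for $\sigma_k$, is actually more careful than the paper's terse treatment, which glosses over this trade-off and effectively records only the ``no-duplicate'' segment lengths; but the underlying mechanism and the final four-case summation to $4n-4$ are identical.
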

\begin{proof} The first item is by $1$ and $2$-completeness. For the third segment, if the head terminal does not occur in two consecutive segments, then $\sigma_1$ has to be $2$-complete for the $n-1$ letters from the set $A \setminus \{\mathbf n\}$. By recursive argument, that requires $(n-1) + (n-2) + 1$ length. This is not possible because a removal operation will remove too many elements which produces a supersequence smaller than the minimum. For the fourth segment, the same argument shows that $n-1$ has to occur either in $\sigma_3$ or $\sigma_4$.
\end{proof}

\noindent \textbf{Example 3} For a set of eight letters, the following are the possible configurations of the first four segments:
\begin{enumerate}
  \item $\sigma_1=\langle 1,2,3,4,5,6,7,8 \rangle$, $\sigma_2=\langle 1,2,3,4,5,6,7 \rangle$, $\sigma_3=\langle 1,8,2,3,4,5,6 \rangle$, $\sigma_4=\langle 1,7,2,3,4,5 \rangle$.
  \item $\sigma_1=\langle 1,2,3,4,5,6,7,8 \rangle$, $\sigma_2=\langle 1,2,3,4,5,6,8,7 \rangle$, $\sigma_3=\langle 1,2,3,4,5,6 \rangle$, $\sigma_4=\langle 1,7,2,3,4,5 \rangle$.
    \item $\sigma_1=\langle 1,2,3,4,5,6,7,8 \rangle$, $\sigma_2=\langle 1,2,3,4,5,6,8,7 \rangle$, $\sigma_3=\langle 1,2,3,4,5,7,6 \rangle$, $\sigma_4=\langle 1,2,3,4,5 \rangle$.
    \item $\sigma_1=\langle 1,2,3,4,5,6,7,8 \rangle$, $\sigma_2=\langle 1,2,3,4,5,6,7 \rangle$, $\sigma_3=\langle 1,8,2,3,4,5,7,6 \rangle$, $\sigma_4=\langle 1,2,3,4,5 \rangle$.
\end{enumerate}
In all configurations, $8$ occurs twice and $7$ occurs thrice in the four segments, and the total length of the segments is $28$.

The above size requirement is derived based on the definition of completeness. As will be seen in the later section, for the segmentation of a supersequence, the letter $8$ has to appear in $\sigma_4$ as well. \\

The following is the key result of occurrences of letters with smaller sets.

\begin{theorem} \label{th2}
The minimum number of occurrences of any letter in a minimum-sized supersequence over a set of seven or six letters is four. \end{theorem}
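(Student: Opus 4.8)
The plan is to prove the lower bound --- that every letter occurs at least four times --- and then pin the value to exactly four. Fix a minimum-sized supersequence $\sigma$ over $A$ with $|A| = n \in \{6,7\}$, and a letter $a$ occurring $t$ times at positions $p_1 < \cdots < p_t$. Deleting the copies of $a$ cuts $\sigma$ into the $a$-free blocks $B_0, B_1, \ldots, B_t$, where $B_0 = \sigma[1,p_1-1]$, $B_m = \sigma[p_m+1,p_{m+1}-1]$ and $B_t = \sigma[p_t+1,-1]$, each a sequence over $A\setminus\{a\}$. The structural fact I would lean on is that a permutation $u \cdot a \cdot v$ of $A$ embeds in $\sigma$ if and only if some occurrence $p_m$ satisfies $u \preceq B_0\cdots B_{m-1}$ and $v \preceq B_m \cdots B_t$; so the occurrences of $a$ must ``cut'' $\sigma$ to cover \emph{every} ordered split $(u,v)$ of $A\setminus\{a\}$. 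The whole task reduces to showing that $t \le 3$ forces an uncovered split, contradicting that $\sigma$ is a supersequence.

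First I would record the outer-block budgets. Taking $u$ empty forces $B_1 \cdots B_t$ to be a supersequence over the $(n-1)$-set $A\setminus\{a\}$, and taking $v$ empty forces $B_0 \cdots B_{t-1}$ to be one; by Theorem~\ref{th1} each has length at least $M(n-1)$, where I write $M(3)=7$, $M(4)=12$, $M(5)=19$, $M(6)=28$, $M(7)=39$. Since $|\sigma| = M(n)$ and the blocks together omit the $t$ copies of $a$, this gives $|B_0| \le M(n) - t - M(n-1)$ and, by the same argument run on the reverse supersequence, $|B_t| \le M(n) - t - M(n-1)$. For $t \le 2$ these budgets fall strictly below $2(n-1)-1$, the minimum length of a $2$-complete sequence over $n-1$ letters (an auxiliary fact I would verify by the standard two-pass construction). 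Hence each outer block misses either an ordered pair or an entire letter of $A\setminus\{a\}$; the idea is to convert such a \emph{deficit} of one outer block into a supersequence requirement on the other, which then exceeds its length budget. Concretely, if $B_0$ omits a letter $c$, then every split with $u = \langle c\rangle$ forces $v \preceq B_t$ for all permutations $v$ of the remaining $n-2$ letters, i.e. $|B_t| \ge M(n-2) > |B_t|$; and if $B_0$ only omits a pair $\langle x,y\rangle$, I would pair it with a pair $\langle z,w\rangle$ omitted by $B_t$ on disjoint letters to build the offending split directly. This disposes of $t = 1, 2$ for both $n = 6$ and $n = 7$.

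The main obstacle is the case $t = 3$, together with the near-boundary bookkeeping for $n = 6$. With three cut points the covering condition becomes genuinely two-dimensional: using that $B_0B_1B_2$ and $B_1B_2B_3$ are full supersequences over $A\setminus\{a\}$, a split $(u,v)$ stays uncovered only when $u \not\preceq B_0$, $v \not\preceq B_3$, and simultaneously ($u \not\preceq B_0B_1$ or $v \not\preceq B_2B_3$), so the short outer budgets $|B_0|,|B_3|$ must be played off against the middle blocks $B_1,B_2$. I expect to resolve this by a finer count of how many short perms a block of bounded length over a fixed alphabet can realize --- sharpening the ``deficit'' analysis of the previous paragraph and invoking the exact values $M(3),M(4),M(5)$ and the reverse-segmentation symmetry --- to always exhibit a split escaping all three occurrences. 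Once every letter is shown to occur at least four times, I would pin the value at exactly four: for $n = 6$, if every letter occurred at least five times the total length would be at least $30 > 28 = M(6)$, forcing some letter down to four; for $n = 7$ I would instead exhibit Newey's optimal length-$39$ supersequence and observe that a letter in it occurs exactly four times, so the bound is attained. The delicate $t=3$ covering count is where I anticipate the real work, and it is the step most in need of careful case analysis rather than a single clean estimate.
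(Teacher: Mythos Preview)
Your block decomposition around an arbitrary letter is a genuinely different route from the paper's. The paper first reduces to the head terminal letter $\mathbf{n}$ via the removal operation: if some other letter $x$ had only three occurrences, removing $\mathbf{n}$ (together with all of $\sigma_1$) would produce a supersequence over $n-1$ letters in which $x$ occurs at most twice, which is ruled out by a shorter instance of the same argument. With the offending letter pinned down as $\mathbf{n}$, the paper uses the segmentation $\sigma_1,\ldots,\sigma_n$ rather than arbitrary $a$-free blocks, argues that the three copies of $\mathbf{n}$ must sit in $\sigma_1$, $\sigma_3$, and $\sigma_{5\ldots 6}$, splits $\sigma$ at the copy in $\sigma_3$, and finishes with a short case split on whether the first three segments contain a duplicate. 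The segmentation buys tight positional control (knowing which segment each copy lives in) that your raw block framework lacks; in exchange, your setup is symmetric in the letter and conceptually cleaner.

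The genuine gap is the $t=3$ case, which you leave as ``I expect to resolve this.'' That is precisely where all the work lies: with three cut points the covering condition becomes the two-dimensional requirement that some split $(u,v)$ satisfy $u\not\preceq B_0$, $v\not\preceq B_3$, and ($u\not\preceq B_0B_1$ or $v\not\preceq B_2B_3$), and the outer budgets $|B_0|,|B_3|\le M(n)-3-M(n-1)$ (namely $6$ for $n=6$ and $8$ for $n=7$) are too loose for a one-line contradiction. A complete argument here needs substantial case analysis comparable in effort to the paper's segment-based proof, and you have not supplied it. There is also a smaller hole at $t=2$, $n=6$: your ``pair it with a disjoint missed pair of $B_t$'' step is not justified, since a length-$7$ block over five letters can have all its missed pairs confined to a single $3$-set (e.g.\ $\langle 1,2,3,4,5,1,2\rangle$ misses only pairs inside $\{3,4,5\}$), so disjoint missed pairs need not exist. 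This one is easily patched---if $B_0$ misses $\langle x,y\rangle$ then $B_2$ must be $3$-complete on $A\setminus\{a,x,y\}$, forcing $|B_2|=7$ with $x,y\notin B_2$, whence $B_2$ omits a \emph{letter} and the earlier case applies---but it should be stated rather than waved at.
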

\begin{proof} 
The proof is similar for a set of seven letters, so we will prove the Theorem for a set of six. Suppose $\sigma$ is a supersequence over a set $A$ of six letters. If $\mathbf 6 = L(\sigma, A)$ occurs four times and another letter $x$ occurs three times, a removal operation of $\mathbf 6$ will produce a supersequence over a set of five letters where $x$ occurs only twice, which can be proven to be impossible. So without loss of generality, we assume $\mathbf 6$ occurs three times. The letter $\mathbf 6$ occurs in $\sigma_1$ and $\sigma_{5...6}$ due to the $1$-completeness of the first segment and first reverse segement. Performing a removal operation of $\mathbf 6$ will remove three occurrences of it plus the entire $\sigma_1$. But the total number of elements removed cannot be bigger than $28-19=9$, the difference between the minimum lengths of supersequences over a set of six and a set of five. So, the maximum length of $\sigma_1$ is seven. That means there is at most one duplicate element in $\sigma_1$.

If $\mathbf 6$ also occurs in $\sigma_2$, then it cannot occur in $\sigma_{3...4}$. This is not possible because it will require that $\sigma_2$ be $3$-complete for a set of five letters, and that requires $5+4+4$ elements, plus the two occurrences of $\mathbf 6$. This exceeds the maximum length of $\sigma_2$. So we assume $\mathbf 6 \in \sigma_3$ and $\mathbf6 \notin \sigma_2, \sigma_4$.

Let $\sigma^{(1)}$ be the supersequence before the occurrence of $\mathbf 6$ in $\sigma_3$, namely $\sigma^{(1)} = \sigma_{1...2} \cdot \sigma_3[1, \sigma_3^{-1}[\mathbf 6]]$. Let $\sigma^{(2)}$ be the rest of the supersequence. The five letters of $A \setminus \{\mathbf6\}$ have to be $3$-complete before the position of $\mathbf 6$ in $\sigma_3$. That requires $5+4+4$ elements and $|\sigma^{(1)}| \ge 15$. Note there are three occurrences of $\mathbf 1$ in $\sigma_{4...6}$ and one occurrence of $\mathbf 6$ in $\sigma^{(2)}$. Consider the following two cases.\\

\noindent \textbf{Case 1:} There is no duplicate in the first three segments. So, the length of $\sigma^{(2)}$ is at most $13$. Therefore, there is at most one letter from $\{\mathbf2,\mathbf3,\mathbf4,\mathbf5\}$ occurring three times in $\sigma^{(2)}$. Without loss of generality, suppose it is $\mathbf4$. The $3$-perm $\langle \mathbf4,\mathbf1,\mathbf6 \rangle$ or $\langle \mathbf 1,\mathbf4,\mathbf6 \rangle$ is generated at $\mathbf6$ in $\sigma_3$. But the rest of three letters $\{\mathbf2,\mathbf3,\mathbf5\}$ cannot form a supersequence over three letters in $\sigma^{(2)}$ because none of them occur thrice.\\

\noindent \textbf{Case 2:} There is a duplicate in the first three segments. So, the length of $\sigma^{(2)}$ is at most $12$. Therefore, there is no letter from $\{\mathbf2,\mathbf3,\mathbf4,\mathbf5\}$ occurring three times in $\sigma^{(2)}$. If the duplicate does not occur in $\sigma_1$ or if the duplicate is not $\mathbf1$, the same reasoning of Case 1 proves the Theorem. Otherwise, the $3$-perm $\langle \mathbf3,\mathbf2,\mathbf1 \rangle$ is generated at $3$ and the rest of the letters $\{\mathbf4,\mathbf5,\mathbf6\}$ all occur twice in $\sigma^{(2)}$, but that cannot form a supersequence over three letters.
\end{proof}
 
The first two items of the following lemma are proven by Newey \cite{Newey}
\begin{lemma} \label{lem5}
The frequency distributions of the letters of a minimum supersequence over sets smaller than eight are given by the following.
\begin{enumerate}
  \item For set of three letters, the frequencies are given by $4,2,1$; $3,3,1$ or $3,2,2$.
  \item For set of four letters, the frequencies are given by $4,3,3,2$.
  \item For set of five letters, the frequencies are given by $5,4,4,3,3$.
  \item For set of six letters, the frequencies are given by $6,5,5,4,4,4$.
  \item For set of seven letters, the frequencies are given by $7,6,6,5,5,5,5$ or $7,6,6,6,5,5,4$.
\end{enumerate} \end{lemma}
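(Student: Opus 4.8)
The plan is to induct on the set size $n$, using the removal operation of the head terminal letter $\mathbf{n}$ (Definition~\ref{def10}, Lemma~\ref{lem3}) together with Theorem~\ref{th1} (the minimum lengths) and Theorem~\ref{th2} (minimum occurrence four for $n=6,7$); items~1 and~2, due to Newey, serve as the base. Write $m(k)$ for the minimum supersequence length over $k$ letters, so $m(3),\dots,m(7)=7,12,19,28,39$. The single engine is a frequency bound. Since the head terminal $\mathbf{n}$ is the last-appearing letter of $\sigma_1$, its first occurrence is $\sigma_1[-1]$, and by Lemma~\ref{lem2} it does not recur in $\sigma_1$; hence the removal deletes exactly $|\sigma_1|+f(\sigma,\mathbf{n})-1$ elements, and $|\sigma'|=m(n)-|\sigma_1|-f(\sigma,\mathbf{n})+1\ge m(n-1)$. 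With $|\sigma_1|\ge n$ from Lemma~\ref{lem4} this gives $f(\sigma,\mathbf{n})\le m(n)-m(n-1)-n+1$, i.e. $f(\sigma,\mathbf{n})\le 3,4,5$ for $n=5,6,7$.

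The step is clean whenever $f(\sigma,\mathbf{n})$ is pinned down and $\sigma_1$ is forced to be a permutation. For $n=6$, Theorem~\ref{th2} gives $f(\sigma,\mathbf{6})\ge 4$, so $f(\sigma,\mathbf{6})=4$; then $|\sigma_1|+f(\sigma,\mathbf{6})-1\le m(6)-m(5)=9$ forces $|\sigma_1|=6$, so $\sigma_1$ is a permutation and $|\sigma'|=19=m(5)$ is minimal with distribution $5,4,4,3,3$ by item~3. Each non-$\mathbf{6}$ letter occurs once in $\sigma_1$, so its frequency in $\sigma$ is one more than in $\sigma'$, giving $6,5,5,4,4$, and with $f(\sigma,\mathbf{6})=4$ this is exactly $6,5,5,4,4,4$. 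For $n=7$, Theorem~\ref{th2} only yields $f(\sigma,\mathbf{7})\in\{4,5\}$, which is the source of the two distributions: if $f(\sigma,\mathbf{7})=5$ then $|\sigma_1|=7$ is a permutation, $\sigma'$ realizes the minimal size-six distribution $6,5,5,4,4,4$, and lifting gives $7,6,6,5,5,5,5$; if $f(\sigma,\mathbf{7})=4$ the intended outcome is $|\sigma_1|=8$ with a single duplicated letter $x$, so lifting adds $2$ to $f(\sigma',x)$ and $1$ to the others, producing $7,6,6,6,5,5,4$ precisely when $x$ is one of the frequency-four letters of $\sigma'$.

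Two obstacles remain and carry the real work. First, item~3 has no analogue of Theorem~\ref{th2} to bound $f(\sigma,\mathbf{5})$ from below, so I would show directly that $f(\sigma,\mathbf{5})\ge 3$. If $\mathbf{5}$ occurred at most twice, say at positions $p_1\le p_2$, then every permutation ending in $\mathbf{5}$ forces $\sigma[1,p_2-1]$ to be a supersequence over the other four letters, and every permutation beginning with $\mathbf{5}$ forces $\sigma[p_1+1,-1]$ to be one, each of length $\ge m(4)=12$; an overlap count then makes the central block long, and combining this with the constraints from $\mathbf{5}$ in interior positions (using that a block of length at most six cannot be $2$-complete over four letters, since $2\cdot 4-1=7$) pushes the total length above $19$. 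Once $f(\sigma,\mathbf{5})=3$, the bound $|\sigma_1|+f(\sigma,\mathbf{5})\le 8$ forces $|\sigma_1|=5$, and lifting $4,3,3,2$ gives $5,4,4,3,3$.

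The second and harder obstacle is ruling out the spurious distributions in the $f(\sigma,\mathbf{7})=4$ branch: a duplicate of $\sigma_1$ landing on the wrong letter would lift $6,5,5,4,4,4$ to $8,6,6,5,5,5,4$ or $7,7,6,5,5,5,4$, while a permutation $\sigma_1$ would instead leave $|\sigma'|=29$, non-minimal and outside the reach of item~4. I would close these off by (i) forcing the removed block to have the maximal size $m(7)-m(6)=11$, so $|\sigma_1|=8$ and $\sigma'$ is minimal and item~4 applies, and (ii) controlling which letter is the duplicate of $\sigma_1$, showing it cannot be a frequency-five or frequency-six letter of $\sigma'$; in particular a bound of the form ``no letter of $\sigma$ exceeds frequency $n$'' would eliminate the $8$, and the tail-terminal bound obtained by applying everything above to the reverse segmentation (Definition~\ref{def3}) would rule out two letters simultaneously reaching the maximum. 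I expect step (ii) to be the crux, and to require combining Lemma~\ref{lem2} and Lemma~\ref{lem4} on the structure of the initial segments with the frequency bounds above, since it is precisely the identity of the letter repeated in $\sigma_1$ that separates the admissible distribution from the inadmissible ones.
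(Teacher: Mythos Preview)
Your inductive plan via the removal operation is exactly the paper's approach; the engine $|\sigma_1|+f(\sigma,\mathbf n)-1\le m(n)-m(n-1)$ and the lifting of the smaller distribution by $+1$ per letter (plus $f(\sigma,\mathbf n)$ for the head terminal) is precisely what the paper does, only the paper states it in two sentences rather than spelling it out.

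Where you depart is in the two ``obstacles.'' The paper does not resolve either of them. For $n=5$ it simply asserts that the minimum frequency of any letter is three and proceeds; this fact is treated as known (in the spirit of the Newey attribution for items~1 and~2, and it is implicitly invoked inside the proof of Theorem~\ref{th2} as well), so your overlap argument is extra work the paper does not attempt. For $n=7$ the paper does even less than you: having noted that the minimum frequency can be four, it says only that ``there is an alternative distribution as given by the following example'' and exhibits Example~4. It does not rule out the spurious lifts $8,6,6,5,5,5,4$ or $7,7,6,5,5,5,4$, nor does it treat the subcase $|\sigma_1|=7$ with $|\sigma'|=29$ that you correctly flag. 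So the ``crux'' you anticipate in step~(ii) is simply absent from the paper's argument; Lemma~\ref{lem5} is stated and used with a proof sketch that covers the generic mechanism but not these boundary exclusions. Your proposal is therefore already more complete than the paper's own proof, though your sketches for the two obstacles are themselves only outlines and would still need to be finished.
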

\begin{proof} Since the minimum frequency of any letter for a supersequence over a set of five is three, we can perform a removal to produce a supersequence over a set of four, which only has one possible distribution. So we conclude the distribution over a set of five as claimed. Likewise, the minimum frequency of any letter for a supersequence over a set of six is four, and we arrive at the claimed distribution. For a set of seven, since its letter minimum frequency can be four, there is an alternative distribution as given by the following example.
\end{proof}
 
\noindent \textbf{Example 4} The following is the segmentation of a non-Newey style supersequence over a set of seven letters with $f(\sigma, 7) = 4$.\\
$\sigma_1 = \langle 1,2,3,4,5,6,7 \rangle$. $\sigma_2 = \langle 1,2,3,4,5,6 \rangle$. $\sigma_3 = \langle 1,7, 2,3,4,5 \rangle$. $\sigma_4 = \langle 1,6, 2,3,7,4 \rangle$. $\sigma_5 = \langle 1,5,6,2,3 \rangle$. $\sigma_6 = \langle 1,4,7,5,6,2 \rangle$. $\sigma_7 = \langle 3,1,4 \rangle$.\\

\noindent \textbf{Example 5} The following are three minimum-sized supersequences over a set of three letters with the three possible frequency distributions.\\
$\langle a, b, a, c, a, b, a\rangle$. $\langle a, b, a, c, b, a, b\rangle$. $\langle a, b, c, a, b, a, c\rangle$.

\begin{lemma} \label{lem6}
The minimum lengths of the first five substrings of segments of a minimum-sized supersequence over a set of seven letters are given by:
\begin{enumerate}
  \item $|\sigma_1| \ge 7$, $|\sigma_{1...2}| \ge 13$, $|\sigma_{1...3}| \ge 19$, $|\sigma_{1...4}| \ge 24$.
  \item $|\sigma_{1...5}| \ge 29$.
\end{enumerate} \end{lemma}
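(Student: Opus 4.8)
The plan is to handle the two items separately: item 1 follows almost verbatim from Lemma \ref{lem4} at $n=7$, and the substance of the lemma is item 2.

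For item 1, the bounds $|\sigma_1|\ge 7$ and $|\sigma_{1...2}|\ge |\sigma_1|+|\sigma_2|\ge 7+6=13$ are immediate from the first item of Lemma \ref{lem4}. For $|\sigma_{1...3}|\ge 19$ I would split on whether $7\in\sigma_2$: if so, the second item of Lemma \ref{lem4} gives $|\sigma_2|\ge 7$ and $|\sigma_3|\ge 5$, so $|\sigma_{1...3}|\ge 7+7+5=19$; if not, it gives $|\sigma_3|\ge 6$ while $|\sigma_2|\ge 6$, so again $|\sigma_{1...3}|\ge 7+6+6=19$. Finally $|\sigma_{1...4}|\ge 24$ is exactly the stated consequence $|\sigma_{1...4}|\ge 4n-4$ of Lemma \ref{lem4} at $n=7$.

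For item 2 the naive per-segment estimate only yields $|\sigma_5|\ge 3$, hence $|\sigma_{1...5}|\ge 27$, which is two short; the missing units must come from a global argument via the removal operation of the head terminal $\mathbf 7$ (Definition \ref{def10}), passing to a supersequence $\sigma'$ over the six letters $A\setminus\{\mathbf 7\}$ by Lemma \ref{lem3}. The key structural step is a completeness-transfer claim: since $\sigma_{1...5}$ is $5$-complete over $A$ and the first occurrence of $\mathbf 7$ sits at $\sigma_1[-1]$, for any $4$-perm $\beta$ over $A\setminus\{\mathbf 7\}$ the $5$-perm $\mathbf 7\cdot\beta$ embeds in $\sigma_{1...5}$ with its $\mathbf 7$ matched at a position at least $\sigma_1[-1]$, so $\beta$ (which avoids $\mathbf 7$) embeds inside $\sigma_{2...5}$ and survives the removal. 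Thus the image of $\sigma_{2...5}$ is $4$-complete over the six letters and is a prefix of $\sigma'$, so it contains $\sigma'_{1...4}$; writing $|\text{image of }\sigma_{2...5}|=|\sigma_{2...5}|-f(\sigma_{2...5},\mathbf 7)$ and combining with $|\sigma'_{1...4}|\ge 20$ (the $4n-4$ bound at $n=6$) and $|\sigma_1|=7$ gives $|\sigma_{1...5}|\ge 27+f(\sigma_{2...5},\mathbf 7)$.

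It then remains to show $f(\sigma_{2...5},\mathbf 7)\ge 2$, and this is where I expect the main difficulty. By Theorem \ref{th2} the letter $\mathbf 7$ occurs at least four times, and since $|\sigma'|=33-f(\sigma,\mathbf 7)\ge 28$ it occurs at most five times, so $f(\sigma,\mathbf 7)\in\{4,5\}$ with exactly one occurrence in $\sigma_1$; the task is to force at least two of the remaining occurrences into $\sigma_{2...5}$ rather than into the tail $\sigma_{6...7}$. I would argue this from the principle used inside the proof of Lemma \ref{lem4} that the head terminal cannot be absent from two consecutive early segments (its absence would force an earlier segment to be $2$-complete over the other six letters, which a removal operation then shortens below the minimum), supplemented by the admissible frequency distributions of Lemma \ref{lem5} and, if necessary, the analogous reverse-segmentation count bounding the number of $\mathbf 7$'s in the tail. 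A secondary gap is that when $f(\sigma,\mathbf 7)=4$ the removal leaves $\sigma'$ of length $29$, one above the minimum of Theorem \ref{th1}, so the bound $|\sigma'_{1...4}|\ge 20$ (stated for minimum supersequences) must be re-justified for $\sigma'$; I expect this to follow by re-running the completeness part of Lemma \ref{lem4} directly on $\sigma'$, since even at length $29$ the room above the five-letter minimum is too small to absorb a $2$-complete initial segment, or else by treating the cases $f(\sigma,\mathbf 7)=4$ and $f(\sigma,\mathbf 7)=5$ separately. Pinning down $f(\sigma_{2...5},\mathbf 7)\ge 2$ cleanly is the crux of the whole lemma.
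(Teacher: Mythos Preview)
Your handling of item~1 matches the paper. For item~2, however, the paper takes a much more direct route than your removal-operation reduction. It simply argues that, in the tight case $|\sigma_{1\dots4}|=24$, the segment $\sigma_5$ must contain at least five distinct letters: the three ``small'' letters $\mathbf1,\mathbf2,\mathbf3$ (forced because the $4$-perm $\langle\mathbf7,\mathbf6,\mathbf5,\mathbf4\rangle$ of terminal letters is generated at $\sigma_4[-1]$, so any appended fifth letter must lie in $\sigma_5$); the head terminal $\mathbf7$ (since in every tight configuration $\mathbf7\notin\sigma_4$, and the ``no two consecutive absences'' principle then forces $\mathbf7\in\sigma_5$); and one of $\mathbf6,\mathbf5$ by the same principle applied one level down. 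That yields $|\sigma_5|\ge5$ outright, hence $|\sigma_{1\dots5}|\ge29$.

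Your approach via the removal of $\mathbf7$ and the $4n-4$ bound on $\sigma'_{1\dots4}$ is sound in outline, but it trades the paper's short letter count for two genuine sublemmas---the frequency bound $f(\sigma_{2\dots5},\mathbf7)\ge2$ and the re-justification of $|\sigma'_{1\dots4}|\ge20$ for a possibly non-minimal $\sigma'$---that together are heavier than the paper's entire argument. In particular, the very ``no two consecutive absences'' principle you intend to invoke to control $f(\sigma_{2\dots5},\mathbf7)$ is precisely what the paper uses to finish the count directly; your route ends up establishing the same structural fact and then post-processing it through an unnecessary reduction to $n=6$. The removal machinery buys generality you do not need here, at the cost of the two gaps you yourself flag.
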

\begin{proof} The minimum lengths of the first four substrings are given by Lemma 4. If $\sigma_{1...4}$ is of minimum length, then there are additional letters $\{\mathbf1,\mathbf2,\mathbf3\}$ to occur in $\sigma_5$. Also, since $\mathbf7 \notin \sigma_4$, so $\mathbf 7 \in \sigma_5$, because otherwise, $\mathbf 7$ will not occur in two consecutive segments. Similarly, either $\mathbf6$ or $\mathbf5$ must also occur in $\sigma_5$. So $|\sigma_5| \ge 5$. This proves that $|\sigma_{1...5}| \ge 29$.
\end{proof}

\section{The Frequencies of Each Letters}

For the rest of the paper, we assume $\sigma$ is a supersequence over a set of eight letters, and its length is $51$. We will derive a contradiction at the end of the paper.

The following lemma places an upper bound on the length of the segments based on the number of occurrences of their terminal letters.

\begin{lemma} \label{lem7} If $|\sigma| = 51$, then we have the following.
\begin{enumerate}
  \item The maximum possible value of $|\sigma_1|$ is $m_1 = 13 - f(\sigma, \mathbf8)$.
  \item The maximum possible value of $|\sigma_2|$ is $m_2 = 12 - f(\sigma, \mathbf7) + f(\sigma_1, \mathbf7)+ f(\sigma_2, \mathbf8) + m_1 - |\sigma_1|$.
  \item The maximum possible value of $|\sigma_3|$ is $m_3 = 10 - f(\sigma, \mathbf6)+ f(\sigma_{1...2}, \mathbf6) + f(\sigma_3, \mathbf8,\mathbf7) + m_2 - |\sigma_2|$.
  \item The maximum possible value of $|\sigma_4|$ is $m_4 = 8 - f(\sigma, \mathbf5) + f(\sigma_{1...3}, \mathbf5)+ f(\sigma_4, \mathbf8,\mathbf7,\mathbf6) + m_3 - |\sigma_3|$.
\end{enumerate} \end{lemma}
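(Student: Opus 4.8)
The plan is to bound each $|\sigma_i|$ by iterating the removal operation of Definition~\ref{def10} four times, peeling off the head terminals $\mathbf 8,\mathbf 7,\mathbf 6,\mathbf 5$ in turn and controlling the length of what is deleted using the minimum lengths $39,28,19,12$ for sets of $7,6,5,4$ letters supplied by Theorem~\ref{th1}. Write $\sigma^{(0)}=\sigma$ and let $\sigma^{(i)}$ be obtained from $\sigma^{(i-1)}$ by removing its head terminal; by Lemma~\ref{lem3} this is a supersequence over a set of $8-i$ letters. I would carry the invariant $|\sigma^{(i-1)}| = L_{i-1} + (m_{i-1}-|\sigma_{i-1}|)$, where $L_0,\dots,L_4 = 51,39,28,19,12$. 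The base case $i=1$ is direct: by Lemma~\ref{lem2} the letter $\mathbf 8$ occurs once in $\sigma_1$, so removing $\mathbf 8$ deletes $|\sigma_1|+(f(\sigma,\mathbf 8)-1)$ elements, and $|\sigma^{(1)}|\ge 39$ gives $51-|\sigma_1|-f(\sigma,\mathbf 8)+1\ge 39$, i.e. $|\sigma_1|\le 13-f(\sigma,\mathbf 8)=m_1$, along with $|\sigma^{(1)}|=39+(m_1-|\sigma_1|)$.

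For the inductive step, removing the head terminal $\mathbf t=\mathbf{9-i}$ of $\sigma^{(i-1)}$ deletes its first segment $\sigma^{(i-1)}_1$ together with every copy of $\mathbf t$ outside that segment, so $|\sigma^{(i)}| = |\sigma^{(i-1)}| - |\sigma^{(i-1)}_1| - f(\sigma^{(i-1)},\mathbf t) + 1$. Combining $|\sigma^{(i)}|\ge L_i$ with the two bookkeeping identities $|\sigma^{(i-1)}_1| = |\sigma_i| - f(\sigma_i,R_{i-1})$ and $f(\sigma^{(i-1)},\mathbf t) = f(\sigma,\mathbf t) - f(\sigma_{1\dots i-1},\mathbf t)$, where $R_{i-1}=\{\mathbf 8,\mathbf 7,\dots,\mathbf{10-i}\}$ is the set of already-removed terminals, yields exactly $|\sigma_i|\le m_i$, since the constant $L_{i-1}-L_i+1$ equals $13,12,10,8$ for $i=1,2,3,4$. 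A one-line back-substitution re-establishes $|\sigma^{(i)}| = L_i + (m_i-|\sigma_i|)$ with nonnegative slack, so the recursion closes and the arithmetic is routine.

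The real content, and the step I expect to be the main obstacle, is justifying the two bookkeeping identities, which both rest on a \emph{segmentation-shift} property: removing the head terminal $\mathbf n$ of a supersequence over an $n$-set carries its segmentation $\sigma_1,\dots,\sigma_n$ to the segmentation $\sigma'_1,\dots,\sigma'_{n-1}$ with $\sigma'_k$ equal to $\sigma_{k+1}$ after deleting every $\mathbf n$. I would first settle the first-segment case $\sigma'_1 = \sigma_2$ with all $\mathbf n$ deleted. Step (a): every letter $b\neq \mathbf n$ occurs in $\sigma_2$, because $\mathbf n$ occurs in $\sigma_1$ only as its last element (Lemma~\ref{lem2}), so the $2$-perm $\langle \mathbf n,b\rangle$ forces $b$ to appear after the end of $\sigma_1$, hence in $\sigma_2$. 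Step (b): $\sigma_2$ ends exactly at the first occurrence of its terminal letter; by Lemma~\ref{lem2} the element $\sigma_2[-1]$ is unique in $\sigma_2$, it cannot be $\mathbf n$ (a trailing $\mathbf n$ is never needed, since each $\langle b,\mathbf n\rangle$ is already realised inside $\sigma_1$), and were it any other surviving letter it would be the last surviving letter to first appear, forcing it to be the terminal letter, a contradiction. Steps (a) and (b) identify $\sigma'_1$ with the minimal prefix of $\sigma'$ collecting all $n-1$ surviving letters, namely $\sigma_2$ with $\mathbf n$ deleted, and the same argument run on $\sigma^{(i-1)}$ pins its head terminal down as $\mathbf{9-i}$.

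The delicate part is upgrading this to the full shift $\sigma'_{1\dots k}$ equals $\sigma_{2\dots k+1}$ with $\mathbf n$ deleted for every $k$; equivalently, that a prefix of $\sigma$ is $(k+1)$-complete over $A$ if and only if the matching prefix of $\sigma'$ is $k$-complete over $A\setminus\{\mathbf n\}$. The reverse direction is immediate from the defect argument of Lemma~\ref{lem3}, while the forward direction needs a minimality argument paralleling the proof of Lemma~\ref{lem1}; this correspondence, rather than any calculation, is where I expect the proof to require the most care. Once it is in hand, iterating gives $\sigma^{(i-1)}_1 = \sigma_i$ with the letters of $R_{i-1}$ deleted, which is precisely the identity invoked in the inductive step above, and the four bounds follow.
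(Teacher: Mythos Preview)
Your approach is the same as the paper's: iterate the removal operation, peel off the head terminals $\mathbf 8,\mathbf 7,\mathbf 6,\mathbf 5$ in turn, and bound each first segment using the gaps $51-39,\ 39-28,\ 28-19,\ 19-12$ from Theorem~\ref{th1}. Your base case and inductive arithmetic match the paper's computation exactly.

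Where you differ is in rigor, not in method. The paper simply \emph{asserts} the bookkeeping identities $|\sigma_2|=|\sigma'_1|+f(\sigma_2,\mathbf 8)$ and $f(\sigma',\mathbf 7)=f(\sigma,\mathbf 7)-f(\sigma_1,\mathbf 7)$ (and says ``the argument for the rest is similar''), without ever mentioning that these rest on a segmentation-shift property. You correctly isolate this as the substantive step, and your argument for the first-segment case (steps~(a) and~(b)) is clean and complete: Lemma~\ref{lem2} forces $\sigma_1[-1]=\mathbf n$ uniquely, so every $\langle \mathbf n,b\rangle$ pushes $b$ into $\sigma_2$, and $\sigma_2[-1]\neq\mathbf n$ since any trailing $\mathbf n$ would be redundant. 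Iterating this first-segment argument through $\sigma',\sigma'',\sigma'''$ is exactly what the paper implicitly does.

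Your caution about the \emph{full} shift $\sigma'_k=\sigma_{k+1}\setminus\{\mathbf n\}$ for $k\ge 2$ is well-placed --- the ``forward direction'' you flag really is the nontrivial half, and the equivalence you state is not literally true for arbitrary prefixes (a $(k+1)$-perm ending in $\mathbf n$ can obstruct it). But you do not actually need the full shift: you only need that $\sigma^{(i-1)}_1$ equals $\sigma_i$ with $R_{i-1}$ deleted, and this follows by applying your first-segment argument inside each $\sigma^{(i-1)}$, provided you also track that the head terminal of $\sigma^{(i-1)}$ is $\mathbf{9-i}$ (which you note, and which follows directly from Definition~\ref{def7} since deleting earlier terminals does not disturb first-appearance order). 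So your proposal is already sufficient once you drop the stronger equivalence and keep only the iterated first-segment identification; the paper's proof, by contrast, leaves all of this implicit.
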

\begin{proof} 
For every removal operation of the head terminal letter $x$ performed, the number of elements removed from the supersequence is $f(\sigma, x) + |\sigma_1| -1$. This number cannot be bigger than the difference in length between the supersequence and the smallest length of the next supersequence as given by the series $39$, $28$, $19$ and $12$ from Theorem 1. Note also each removal operation of $x$ removes the head terminal letter from later segments, and reduces the number of count $f(\sigma, x)$ by one in the resulting supersequence.

\begin{enumerate}
  \item Since $f(\sigma, \mathbf 8) + |\sigma_1| -1 \le 51-39$. The maximum value of $|\sigma_1|$ is as stated.
  \item Denote the resulting supersequence from the first removal operation as $\sigma'$. Its length is given by $|\sigma'|=39+m_1 - |\sigma_1|$. The next removal operation gives $f(\sigma', 7) + |\sigma'_1| -1 \le |\sigma'|-28$, where $\sigma'_1$ is the first segment of $\sigma'$. The maximum value of $|\sigma'_1|$ is $12 - f(\sigma', 7) + m_1 - |\sigma_1|$. But  $f(\sigma', 7) = f(\sigma, 7) -  f(\sigma_1, 7) $, and $|\sigma_2| = |\sigma'_1| + f(\sigma_2, 8)$. The value of $m_2$ is as stated.
  \item The argument for the rest is similar.  \end{enumerate} \end{proof}

We start by counting the minimum frequencies of letters in the following lemma. Later, we will improve these counts.

\begin{lemma} \label{lem11}
The following are the minimum occurrences of each letter within $\sigma$. 
\begin{enumerate}
  \item $f(\sigma, \mathbf8) \ge 4$, $f(\sigma, \mathbf7) \ge 4$, $f(\sigma, \mathbf6) \ge 4$.
  \item $f(\sigma, \mathbf5) \ge 5$.
  \item $f(\sigma, \mathbf4) \ge 6$, $f(\sigma, \mathbf3) \ge 6$, $f(\sigma, \mathbf2) \ge 6$,  $f(\sigma, \mathbf1) \ge 6$.
\end{enumerate} \end{lemma}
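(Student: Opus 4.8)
The plan is to bound every frequency from below by repeatedly applying the removal operation of the head terminal letter (Lemma~\ref{lem3}) and carefully tracking how frequencies change at each step. The governing observation is an accumulation identity: if a letter $x$ survives $t$ successive removal operations, so that $\sigma$ is reduced to a supersequence $\tau$ over a set of size $8-t$, then $f(\sigma, x) = f(\tau, x) + r$, where $r$ counts the occurrences of $x$ deleted together with the $t$ removed first segments. Since each deleted first segment is $1$-complete over its current alphabet, it contains $x$ at least once, so $r \ge t$ and hence $f(\sigma, x) \ge f(\tau, x) + t$. Thus any lower bound on the frequency of a surviving letter in a reduction lifts to a stronger bound on $\sigma$.

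With this in hand I would instantiate the reductions against the known small-set data. For $\mathbf1,\mathbf2,\mathbf3,\mathbf4$ I would reduce to a set of four letters ($t=4$): Lemma~\ref{lem5} gives the distribution $4,3,3,2$, whose least value is $2$, so the accumulation yields $f(\sigma,x)\ge 2+4 = 6$. For $\mathbf5$ I would reduce to a set of five letters ($t=3$), where the least frequency available is again small, giving $f(\sigma,\mathbf5)\ge 2+3 = 5$. For $\mathbf6$ and $\mathbf7$ I would reduce to sets of six and seven letters and invoke Theorem~\ref{th2} together with the forward segment structure (Lemma~\ref{lem4} and Example 3), combining a base frequency with $t=2$ and $t=1$ accumulation to reach $4$. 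The letter $\mathbf8$ is deleted by the very first removal and enjoys no accumulation, so I would bound $f(\sigma,\mathbf8)\ge 4$ directly, from its occurrence in $\sigma_1$, its forced reappearance in the early segments recorded in Lemma~\ref{lem4} and Example 3, and a further occurrence supplied by the reverse segmentation near the tail.

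The main obstacle is that the frequency-distribution results I want to quote, Theorem~\ref{th2} and Lemma~\ref{lem5}, are stated only for \emph{minimum-sized} supersequences, whereas a reduction $\tau$ is only guaranteed to have length at least the minimum for its alphabet. I therefore cannot assume a reduction is minimum, and indeed the surplus length permitted by the budget is exactly what keeps the preliminary bound for $\mathbf5$ at $5$ rather than $6$ (the statement already signals that ``we will improve these counts''). Controlling this surplus is the real work: using $|\sigma|=51$ and Lemma~\ref{lem7}, each removal of a head terminal $x$ deletes $f(\sigma,x)+|\sigma_1|-1$ elements, a quantity forced below the successive gaps $12,11,9,7,5$ in the chain of minimum lengths $51,39,28,19,12,7$ coming from Theorem~\ref{th1}. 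The crux is to show that these tight gaps prevent a tracked letter's frequency in the relevant reduction from dropping beneath the base value I used; where this cannot yet be pinned down exactly, I would retreat to the robust bound that no letter of a supersequence this short can occur fewer than twice, which still suffices for the accumulation above. Finally, since every statement about the segmentation transfers to the reverse segmentation, I would cross-check each bound from the tail end, which both corroborates the counts and is what secures the bound for $\mathbf8$.
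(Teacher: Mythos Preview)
Your accumulation principle is correct: if $x$ survives $t$ removals, then $f(\sigma,x)\ge f(\tau,x)+t$ because each removed first segment is $1$-complete and therefore contains $x$. The gap is in the base case. You need $f(\tau,x)\ge 2$ in the reduction $\tau$, and you propose to secure this either via Theorem~\ref{th2}/Lemma~\ref{lem5} or via a ``robust'' lower bound. Neither works as stated. The cited results apply only to \emph{minimum-sized} supersequences, and the budget inequalities of Lemma~\ref{lem7} bound the amount removed from \emph{above}, which lower-bounds $|\tau|$ --- the wrong direction for forcing minimality. Your fallback, that every letter of a ``short'' supersequence occurs at least twice, is false without a length hypothesis: a supersequence over four letters of length $\ge 15$ can have a letter of frequency $1$ (split it at that letter into two supersequences over three letters), and after four removals you can only guarantee $|\tau|\le 51-(8{+}7{+}6{+}5)=25$. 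So for $\mathbf4$ and $\mathbf5$ your scheme delivers only $f(\sigma,\mathbf4)\ge 5$ and $f(\sigma,\mathbf5)\ge 4$, one short in each case. Your direct treatment of $\mathbf8$ is likewise incomplete: Lemma~\ref{lem4} and Example~3 account for only two occurrences in $\sigma_{1\ldots4}$, and one more from $\rho_1$ gives three, not four.

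The paper avoids the whole issue for items 2 and 3 by never reducing. It reads the bound straight off the labeling in Definition~\ref{def7}: by construction, every letter of $A\setminus\{\mathbf8,\ldots,\mathbf{k{+}1}\}$ --- in particular $\mathbf k$ --- occurs in each of the disjoint blocks $\sigma[1,p_8],\ \sigma[p_8{+}1,p_7],\ \ldots,\ \sigma[p_{k+1}{+}1,p_k]$, giving $9-k$ occurrences outright; one further occurrence comes from $\rho_1$, which sits inside $\sigma_{5\ldots 8}$. Thus $f(\sigma,\mathbf5)\ge 4+1$ and $f(\sigma,\mathbf4)\ge 5+1$ with no appeal to frequency distributions of reduced sequences. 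To repair your approach you would have to replace the unjustified base $f(\tau,x)\ge 2$ by exactly this structural count --- at which point the removal machinery is no longer doing any work.
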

\begin{proof} 
\begin{enumerate}
  \item These are corollaries of Theorem 2.
  \item By definition of $\mathbf 5$, we have $5 \in \sigma_i$, for all $1 \le i \le 4$. Also, $\mathbf 5 \in \sigma_{5...8}$ because $\mathbf5 \in \rho_1$ and $\rho_1$ is a substring of $\sigma_{5...8}$.
  \item Same reasoning as above.
\end{enumerate} \end{proof}

\begin{corollary} \label{corollary1}
The letters $\mathbf4$, $\mathbf3$, $\mathbf2$ and $\mathbf1$ cannot be the head terminal letter of the reverse segmentation.
\end{corollary}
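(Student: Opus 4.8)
The plan is to exploit the reverse-segmentation symmetry noted after Definition~\ref{def3}, which reduces the claim to a simple frequency comparison. Let $x$ denote the head terminal letter of the reverse segmentation. By the reverse analog of Lemma~\ref{lem2}, this is precisely the first element $\rho_1[1]$ of the minimal complete suffix, and it occurs exactly once in $\rho_1$. I would show that $x$ is forced to have \emph{low} frequency in $\sigma$, whereas $\mathbf4, \mathbf3, \mathbf2, \mathbf1$ are already known to have \emph{high} frequency, so $x$ cannot be any of them.

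First I would apply the first part of Lemma~\ref{lem7} to the reverse supersequence $\sigma^R$. Since $|\sigma^R| = |\sigma| = 51$ and the head terminal letter of $\sigma^R$ is $x$, the first segment of $\sigma^R$ (which is $\rho_1$ read backwards) has length at most $13 - f(\sigma^R, x)$. Because frequency is invariant under reversal, $f(\sigma^R, x) = f(\sigma, x)$, and because reversing does not change length, the first segment of $\sigma^R$ has the same length as $\rho_1$. Hence $|\rho_1| \le 13 - f(\sigma, x)$.

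Next I would observe that $\rho_1$ is $1$-complete over the eight-letter set $A$, so it contains at least one occurrence of each of the eight letters, giving $|\rho_1| \ge 8$. Combining this with the bound above yields $8 \le 13 - f(\sigma, x)$, that is, $f(\sigma, x) \le 5$. Finally, the third part of Lemma~\ref{lem11} gives $f(\sigma, \mathbf4), f(\sigma, \mathbf3), f(\sigma, \mathbf2), f(\sigma, \mathbf1) \ge 6 > 5$, so $x$ cannot equal any of $\mathbf4, \mathbf3, \mathbf2, \mathbf1$, which is exactly the claim.

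The main obstacle is not computational but one of justification: making the appeal to reverse symmetry fully rigorous. In particular, one must confirm that every quantity entering Lemma~\ref{lem7} transfers correctly to $\sigma^R$ — the constant $13$ arising from $51 - 39$ via Theorem~\ref{th1} is unchanged because $\sigma^R$ is itself a length-$51$ supersequence over eight letters, and the removal operation of the reverse head terminal letter behaves identically on $\sigma^R$. Once this transfer is granted by the symmetry principle, the remaining steps are immediate.
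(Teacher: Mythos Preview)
Your proposal is correct and follows essentially the same approach as the paper: both combine the reverse analog of Lemma~\ref{lem7} (giving $|\rho_1|\le 13 - f(\sigma,x)$ for the reverse head terminal letter $x$) with the $1$-completeness bound $|\rho_1|\ge 8$ and the frequency bound $f(\sigma,\mathbf i)\ge 6$ for $i\le 4$ from Lemma~\ref{lem11}. The only difference is cosmetic---the paper phrases it as a direct contradiction on $|\rho_1|$, while you first extract $f(\sigma,x)\le 5$ and then compare.
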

\begin{proof} 
If any letters with at least six occurrences is the head terminal letter, then the maximum value of the length of the first segment is $13 - 6=7$. This contradicts the $1$-completeness definition.
\end{proof}

\begin{lemma} \label{lem9}
The maximum length of $\sigma_{1...4}$ is $31$.  \end{lemma}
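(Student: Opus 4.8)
The plan is to fold the four upper bounds $m_1,\dots,m_4$ of Lemma~\ref{lem7} into a single inequality for $|\sigma_{1\ldots 4}|$ and then reduce the whole statement to a lower bound on how often the letters $\mathbf 5,\mathbf 6,\mathbf 7,\mathbf 8$ occur in the last four segments. Since $|\sigma_k|\le m_k$ for each $k$, I would substitute these bounds from $k=4$ downward: the term $m_3-|\sigma_3|$ sitting inside $m_4$ lets me replace $|\sigma_3|+|\sigma_4|$ by an expression in $m_3$; then $m_2-|\sigma_2|$ inside $m_3$ folds in $|\sigma_2|$; and finally $m_1-|\sigma_1|$ inside $m_2$ folds in $|\sigma_1|$. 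The slack terms $m_k-|\sigma_k|$ telescope away, and the four constants $13,12,10,8$ of Lemma~\ref{lem7} sum to $43$.

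After collecting terms letter by letter, every total-count term $-f(\sigma,\mathbf j)$ (for $j=5,6,7,8$) cancels against the partial counts that Lemma~\ref{lem7} adds back in the later segments, leaving for each high letter only its occurrences in one designated early segment plus its occurrences in the tail: $\mathbf 8$ survives only through $\sigma_1$ and $\sigma_{5\ldots 8}$, $\mathbf 7$ through $\sigma_2$ and $\sigma_{5\ldots 8}$, $\mathbf 6$ through $\sigma_3$ and $\sigma_{5\ldots 8}$, and $\mathbf 5$ through $\sigma_4$ and $\sigma_{5\ldots 8}$. Because $\mathbf 8,\mathbf 7,\mathbf 6$ are the terminal letters of $\sigma_1,\sigma_2,\sigma_3$ and $\mathbf 5\in\sigma_4$ (by the reasoning of Lemma~\ref{lem11}), each of these four designated occurrences contributes at least $1$, so the computation collapses to
$$|\sigma_{1\ldots 4}|\le 43-4-f(\sigma_{5\ldots 8},\mathbf 5,\mathbf 6,\mathbf 7,\mathbf 8)=39-f(\sigma_{5\ldots 8},\mathbf 5,\mathbf 6,\mathbf 7,\mathbf 8).$$
Hence it suffices to prove $f(\sigma_{5\ldots 8},\mathbf 5,\mathbf 6,\mathbf 7,\mathbf 8)\ge 8$, i.e.\ that the four high letters occur a total of at least eight times in the last four segments.

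To establish this tail count I would pass to the reverse segmentation $\rho_8,\dots,\rho_1$ of Definition~\ref{def3} and use the principle that every result for the forward segmentation holds verbatim in reverse. Each of $\mathbf 5,\mathbf 6,\mathbf 7,\mathbf 8$ already occurs once in $\rho_1$, which is a substring of $\sigma_{5\ldots 8}$, supplying $4$ of the required $8$. For a second occurrence of each I would invoke the reverse analogues of Lemma~\ref{lem4} and of the remark following Example~3 (that the head terminal letter reappears in the fourth segment): applied to $\sigma^{R}$ these force the \emph{reverse-high} letters to recur among $\rho_1,\rho_2,\rho_3,\rho_4$. Matching the reverse-high letters with $\{\mathbf 5,\mathbf 6,\mathbf 7,\mathbf 8\}$ — which, like the forward-high letters, are exactly the letters of minimum total frequency by Theorem~\ref{th2} and Lemma~\ref{lem11} — would then promote these recurrences to genuine second occurrences of $\mathbf 5,\mathbf 6,\mathbf 7,\mathbf 8$ inside the tail, yielding the bound $\ge 8$.

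The main obstacle is exactly this matching, compounded by the fact that the forward head $\sigma_{1\ldots 4}$ and the reverse head $\rho_{1\ldots 4}$ overlap: since $|\sigma_{1\ldots 4}|\ge 28$ and $|\rho_{1\ldots 4}|\ge 28$ while $|\sigma|=51$, the two suffixes must share at least $5$ positions, so a recurrence guaranteed inside $\rho_{1\ldots 4}$ need not land in the strictly shorter suffix $\sigma_{5\ldots 8}$. I would control this by a short case analysis on where each high letter's penultimate occurrence lies relative to the overlap region $\sigma_{1\ldots 4}\cap\rho_{1\ldots 4}$, using minimality of the segmentation to push that occurrence past position $|\sigma_{1\ldots 4}|$; should some high letter still occur only once in the tail, I would contradict $8$-completeness by exhibiting a permutation ending in that letter which cannot be embedded. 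Once $f(\sigma_{5\ldots 8},\mathbf 5,\mathbf 6,\mathbf 7,\mathbf 8)\ge 8$ is secured, the displayed inequality gives $|\sigma_{1\ldots 4}|\le 31$.
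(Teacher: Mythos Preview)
Your telescoping of Lemma~\ref{lem7} is carried out correctly: the identity
\[
|\sigma_{1\ldots 4}|\;\le\;43-\bigl(f(\sigma_1,\mathbf 8)+f(\sigma_2,\mathbf 7)+f(\sigma_3,\mathbf 6)+f(\sigma_4,\mathbf 5)\bigr)-f(\sigma_{5\ldots 8},\mathbf 5,\mathbf 6,\mathbf 7,\mathbf 8)
\]
is valid, and the four bracketed terms are each at least $1$, so your displayed bound $|\sigma_{1\ldots 4}|\le 39-f(\sigma_{5\ldots 8},\mathbf 5,\mathbf 6,\mathbf 7,\mathbf 8)$ holds. The problem is entirely in the second half, where you try to show $f(\sigma_{5\ldots 8},\mathbf 5,\mathbf 6,\mathbf 7,\mathbf 8)\ge 8$.

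Two of the ingredients you invoke are not available at this point in the paper. First, the ``remark following Example~3'' that $\mathbf 8$ reappears in $\sigma_4$ is a forward reference: it is only established in Lemma~\ref{lem14}, which sits after Lemma~\ref{lem9} and in fact relies on the chain of results that Lemma~\ref{lem9} initiates. Citing it here is circular. Second, your matching step---identifying the reverse-high letters with $\{\mathbf 5,\mathbf 6,\mathbf 7,\mathbf 8\}$ because both are ``the letters of minimum total frequency''---does not follow from Theorem~\ref{th2} or Lemma~\ref{lem11}. Those results give only \emph{lower} bounds on each $f(\sigma,\mathbf j)$; nothing prevents, say, $f(\sigma,\mathbf 8)$ from being large while the reverse head terminal is $\mathbf 5$ (indeed Corollary~\ref{corollary3} and Lemma~\ref{lem11} contemplate exactly such cases). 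So the recurrences you locate inside $\rho_{1\ldots 4}$ need not be recurrences of $\mathbf 5,\mathbf 6,\mathbf 7,\mathbf 8$ at all. You recognise this as ``the main obstacle'' but the proposed fix---a case split on the overlap region plus an unspecified bad permutation---is not an argument; it is where the real work would have to happen, and there is no indication it goes through.

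By contrast, the paper's proof avoids naming any specific letters. It takes a $4$-perm $\alpha$ generated at $\sigma_4[-1]$ and observes that the complementary four letters must form a supersequence in $\sigma_{5\ldots 8}$ (hence $\ge 12$ occurrences), then varies $\alpha$ by swapping in one complementary letter to force a three-letter supersequence on three of the original letters ($\ge 7$ more), and adds one occurrence for the remaining letter, giving $|\sigma_{5\ldots 8}|\ge 20$. This sidesteps entirely the question of which letters are ``high'', which is precisely the point at which your argument stalls.
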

\begin{proof} If $\alpha = \langle a, b, c, d \rangle$ is a $4$-perm generated at the last element of $\sigma_4$, then $\sigma_{5...8}$ must form a supersequence over the other four letters in $A \setminus \alpha$. So, that takes up at least twelve elements, meaning $f(\sigma_{5...8}, A \setminus \alpha) \ge 12$. It is possible to create another $4$-perm or $5$-perm using letters from $A \setminus \alpha$ and a letter, say $a$ from $\alpha$, that is generated at the same position. It shows that $f(\sigma_{5...8}, b,c,d) \ge 7$. So the total length of $\sigma_{5...8}$ is at least $20$, and $\sigma_{1...4} \le 31$. \end{proof}

\begin{lemma} \label{lem10}
If $\rho$ is the reverse sequence of $\sigma$, then the concatenation of its segments $\rho_2 \cdot \rho_1$ is a substring of $\sigma_{5...8}$. \end{lemma}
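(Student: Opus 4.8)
The plan is to reduce the statement to a clean completeness assertion about a forward suffix, and then establish that assertion with the machinery of Lemma~\ref{lem9}. Both $\rho_2\rho_1$ and $\sigma_{5...8}$ are suffixes (final substrings) of $\sigma$, so they are automatically nested: the shorter one is a substring of the longer. By Definition~\ref{def3}, $\rho_2\rho_1$ is the \emph{shortest} final substring of $\sigma$ that is $2$-complete, and $2$-completeness is preserved when we pass to a longer suffix. Hence $\rho_2\rho_1 \subseteq \sigma_{5...8}$ holds if and only if $\sigma_{5...8}$ is itself $2$-complete: if $\sigma_{5...8}$ is $2$-complete then minimality of $\rho_2\rho_1$ forces it to start no earlier than $\sigma_5$, and conversely a suffix containing the $2$-complete $\rho_2\rho_1$ is $2$-complete. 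This turns the lemma into the single claim that \emph{$\sigma_{5...8}$ is $2$-complete over $A$}, with the equivalent length reformulation $|\rho_2\rho_1|\le|\sigma_{5...8}|$, where $|\sigma_{5...8}|\ge 20$ by Lemma~\ref{lem9} together with $|\sigma|=51$.

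Next I would invoke the construction from the proof of Lemma~\ref{lem9}. Let $\alpha=\langle a,b,c,d\rangle$ be the $4$-perm generated at the last element of $\sigma_4$. As shown there, $\sigma_{5...8}$ must be a supersequence over the remaining four letters $A\setminus\alpha$, so every ordered pair drawn from $A\setminus\alpha$ already appears in $\sigma_{5...8}$. It therefore remains to verify the \emph{mixed} pairs: for each $x\in\{a,b,c,d\}$ and each other letter $y$, both $\langle x,y\rangle$ and $\langle y,x\rangle$ occur within $\sigma_{5...8}$. Equivalently, writing $z$ for the letter whose first occurrence in $\sigma_{5...8}$ is latest, it suffices to show that all eight letters occur in $\sigma_{5...8}$ and that every letter other than $z$ reappears after the first occurrence of $z$.

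To control the non-initial letters of $\alpha$, I would use the bookkeeping already on hand: Lemma~\ref{lem11} guarantees each letter occurs at least four times in $\sigma$, and the counting inside the proof of Lemma~\ref{lem9} gives $f(\sigma_{5...8},b,c,d)\ge 7$, so $b,c,d$ are present in $\sigma_{5...8}$ with room to spare. For such an $x$ I would produce one occurrence before the last occurrence of $y$ and one after the first occurrence of $y$ by combining the supersequence-over-$\{e,f,g,h\}$ structure with the $8$-perms that place $x$ immediately before, or immediately after, a permutation of $A\setminus\alpha$; since those permutations of $A\setminus\alpha$ are confined to $\sigma_{5...8}$, any $\alpha$-letter forced to flank them is forced into $\sigma_{5...8}$ as well, in both orders.

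The hard part will be the first letter $a$ of $\alpha$, which the argument of Lemma~\ref{lem9} treats as ``reusable'' and does not count into the guaranteed $f(\sigma_{5...8},b,c,d)\ge 7$. I expect the delicate point to be showing that $a$ nonetheless occurs in $\sigma_{5...8}$, and does so early enough and late enough to supply $\langle a,y\rangle$ and $\langle y,a\rangle$ for every $y$; a configuration in which $a$ lived only inside $\sigma_{1...4}$ would already make $\sigma_{5...8}$ fail to be $1$-complete and would sink the lemma, so that case must be excluded directly. The tight regime is $|\sigma_{1...4}|=31$, where $|\sigma_{5...8}|=20$ leaves essentially no slack beyond the twelve elements needed for the supersequence over $A\setminus\alpha$ and the seven guaranteed occurrences of $b,c,d$; there I would lean on Lemma~\ref{lem4} and the reverse-symmetric analogues of Lemmas~\ref{lem4}--\ref{lem9} to pin the remaining positions and close the estimate $|\rho_2\rho_1|\le|\sigma_{5...8}|$.
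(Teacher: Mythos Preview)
Your opening reduction is exactly right: $\rho_2\rho_1$ and $\sigma_{5\ldots 8}$ are both final substrings of $\sigma$, hence nested, so the lemma is equivalent to the length inequality $|\rho_2\rho_1|\le|\sigma_{5\ldots 8}|$, and you correctly extract $|\sigma_{5\ldots 8}|\ge 20$ from Lemma~\ref{lem9} and $|\sigma|=51$. But having written down this clean length reformulation, you abandon it and try to verify $2$-completeness of $\sigma_{5\ldots 8}$ pair by pair. That detour is where the proposal stalls: the mixed-pair argument for $b,c,d$ is vague, and for the first letter $a$ of your $4$-perm you openly concede you do not have a proof, only a plan to ``lean on'' several lemmas in the tight case $|\sigma_{1\ldots 4}|=31$. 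None of that is carried out, so as written there is a genuine gap.

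The missing idea is simply the matching \emph{upper} bound on $|\rho_2\rho_1|$, obtained by applying Lemma~\ref{lem7} to the reverse segmentation (the paper notes after Definition~\ref{def3} that every segmentation result transfers to the reverse). With the frequency floor $f(\sigma,x)\ge 4$ for every letter $x$ already established, the reverse version of Lemma~\ref{lem7} gives $|\rho_1|\le 9$ and $|\rho_2|\le 9$, hence $|\rho_2\rho_1|\le 18$. Combined with your $|\sigma_{5\ldots 8}|\ge 20$, the nesting forces $\rho_2\rho_1$ to sit inside $\sigma_{5\ldots 8}$, and the lemma is done in two lines. This is precisely the paper's argument; your direct $2$-completeness verification is unnecessary and, in the form you sketched, incomplete.
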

\begin{proof}  Both the maximum length for $\rho_1$ and $\rho_2$ is nine. If it is not a substring of $\sigma_{5...8}$, then the length of $\sigma_{5...8}$ is strictly smaller than $18$ while the length of $\sigma_{1...4}$ is larger than $33$. This contradicts the previous lemma. \end{proof}

Using the new results, we can improve on the counting of the occurrences of some letters.

\begin{lemma} \label{lem11}
The following are the minimum occurrences. 
\begin{enumerate}
  \item $f(\sigma, \mathbf7) \ge 5$, $f(\sigma, \mathbf6) \ge 5$.
  \item $f(\sigma, \mathbf5) \ge 6$.
\end{enumerate} \end{lemma}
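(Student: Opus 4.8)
The plan is to reduce $\sigma$ to supersequences over smaller alphabets by iterating the removal operation of Definition 10, and to read off each target frequency at the level where the letter in question becomes the head terminal. The key bookkeeping observation is that a single removal of the current head terminal $x$ deletes the current first segment together with all other occurrences of $x$; since each first segment is $1$-complete over the surviving alphabet, every surviving letter loses at least one occurrence to it. Consequently, if a letter $y$ survives $r$ removals and is the head terminal of the resulting supersequence $\tau$, then $f(\sigma, y) \ge f(\tau, y) + r$. Concretely, removing $\mathbf8$ produces $\sigma'$ over seven letters with head terminal $\mathbf7$ and $f(\sigma,\mathbf7) \ge f(\sigma',\mathbf7) + 1$; removing $\mathbf8$ then $\mathbf7$ produces $\sigma''$ over six letters with head terminal $\mathbf6$ and $f(\sigma,\mathbf6) \ge f(\sigma'',\mathbf6) + 2$; removing $\mathbf8,\mathbf7,\mathbf6$ produces $\sigma'''$ over five letters with head terminal $\mathbf5$ and $f(\sigma,\mathbf5) \ge f(\sigma''',\mathbf5) + 3$. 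That the head terminal of each reduced supersequence is exactly the next numeral follows from Definition 7, since the last-appearing surviving letter first appears in the new first segment.

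In the cleanest case every reduced supersequence is minimum-sized, and the target bounds fall out of the known distributions. By Theorem 2 the head terminal $\mathbf7$ of a minimum supersequence over seven letters occurs at least four times, so $f(\sigma,\mathbf7) \ge 4 + 1 = 5$. By Lemma 5 a minimum supersequence over six letters has distribution $6,5,5,4,4,4$, so its head terminal $\mathbf6$ occurs four times and $f(\sigma,\mathbf6) \ge 4 + 2 = 6 \ge 5$; a minimum supersequence over five letters has distribution $5,4,4,3,3$, so its head terminal $\mathbf5$ occurs three times and $f(\sigma,\mathbf5) \ge 3 + 3 = 6$. Thus in the minimum-sized case all three claims hold, with room to spare for $\mathbf6$.

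The work lies in the cases where a reduced supersequence exceeds its minimum length. First I would use Lemma 7, together with $|\sigma_1| \ge 8$ and $f(\sigma,\mathbf8)\ge4$, to bound the slack: $|\sigma'| = 52 - f(\sigma,\mathbf8) - |\sigma_1| \in \{39,40\}$, and analogous bookkeeping caps $|\sigma''|$ and $|\sigma'''|$ just above their minima $28$ and $19$. It then suffices to show that the head-terminal frequency cannot drop below the threshold needed for each claim even with this slack, namely $f(\sigma',\mathbf7)\ge4$, $f(\sigma'',\mathbf6)\ge3$, and $f(\sigma''',\mathbf5)\ge3$. The thresholds of $3$ for the six- and five-letter levels are comfortable: the head terminal must occur in the first segment, in the last reverse segment, and cannot be absent from two consecutive segments (the principle used in Lemma 4 and Theorem 2), which already forces at least three occurrences given the number of segments. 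The hard part is the seven-letter level, where I must maintain the full Theorem 2 threshold of four occurrences for $\sigma'$ of length $40$ rather than $39$; here I would assume $f(\sigma',\mathbf7)=3$, perform one further removal of $\mathbf7$ to pass to a six-letter supersequence of controlled length, and derive a contradiction with the completeness requirements forced on the surviving letters, in the style of the two-case analysis in the proof of Theorem 2. This off-minimum seven-letter estimate is the main obstacle.
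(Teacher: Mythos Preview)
Your removal-based plan diverges from the paper's argument, which never iterates removals for this lemma; instead it cases on which of $\mathbf 8,\mathbf 7,\mathbf 6,\mathbf 5$ is the head terminal of the \emph{reverse} segmentation, uses Lemma~\ref{lem10} to place $\rho_2\rho_1$ inside $\sigma_{5\ldots 8}$, and in the hard subcases combines Lemma~\ref{lem6} with Lemma~\ref{lem9} to force the required counts directly in $\sigma$. The obstacle you flag at the seven-letter level is not a side technicality but the entire difficulty. When $|\sigma'|=40$ one has $|\sigma_1|+f(\sigma,\mathbf 8)=12$, hence $|\sigma_1|=8$ and $f(\sigma,\mathbf 8)=4$; with no duplicates in $\sigma_1$ this gives $f(\sigma,\mathbf 7)=f(\sigma',\mathbf 7)+1$ \emph{exactly}, so the inequality $f(\sigma',\mathbf 7)\ge 4$ you need is literally the claim $f(\sigma,\mathbf 7)\ge 5$ restated---the removal buys no leverage. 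Attacking $\sigma'$ as an abstract length-$40$ supersequence over seven letters is not visibly easier: Theorem~\ref{th2}'s counting is calibrated to minimum length, and one element of slack loosens every bound ($|\sigma'_1|$ can reach $10$, and a further removal only yields a six-letter supersequence of length up to $31$), so the two-case analysis does not close without a genuinely new ingredient. That ingredient is precisely what the paper supplies via the reverse-segmentation count.

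There is a secondary gap in the ``comfortable'' claims for $\mathbf 6$ and $\mathbf 5$. The principle that the head terminal cannot be absent from two consecutive segments is not an absolute rule; it is a length inequality (absence from $\sigma_k,\sigma_{k+1}$ forces $(k{-}1)$-completeness over one fewer letter on the prefix), and with the slack you carry---$|\sigma''|$ potentially as large as $31$ and $|\sigma'''|$ as large as $22$---those prefix lower bounds are not automatically violated. For instance, $f(\sigma''',\mathbf 5)=2$ with occurrences in $\sigma'''_1$ and $\sigma'''_3$ only forces $|\sigma'''_{1\ldots 3}|\ge 13$, which is compatible with $|\sigma'''|=22$. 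So these branches also require explicit bookkeeping that your sketch does not provide.
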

\begin{proof} 
We utilize the fact that $\rho_2 \cdot \rho_1$ is a substring of $\sigma_{5...8}$ and divide the proof based on the head terminal letter of the reverse segmentation. \\
  
\noindent \textbf{Case 1:} If $\mathbf8$ is the head terminal letter of the reverse segmentation, then the letters $\mathbf7$, $\mathbf6$ and $\mathbf5$ must occur in both $\rho_2$ and $\rho_1$. Since $\sigma_{1...4}$ contains three occurrences of $\mathbf7$ and $\mathbf6$, and four occurrences of $\mathbf5$, the lemma is proven. \\
 
\noindent \textbf{Case 2:} If $\mathbf5$ is the head terminal letter of the reverse segmentation, then the same reasoning proves $f(\sigma, \mathbf7) \ge 5$, $f(\sigma, \mathbf6) \ge 5$. It remains to prove $f(\sigma, \mathbf5) \ge 6$.

Since $f(\sigma, \mathbf5) \ge 5$, the segment $\rho_1$ cannot contain duplicates. However, any $6$-perm generated at the last element of $\sigma_6$ will prove that $\sigma_{7...8}$ must contain a letter that occurs more than once. So $\rho_1$ must be strictly a substring of $\sigma_{7...8}$.

If $f(\sigma, \mathbf5) = 5$, then $\mathbf5$ does not occur in $\sigma_{5...6}$. So, any $5$-perm formed from the seven letters in $A \setminus \{\mathbf5\}$ must be generated before the last occurrence of $\mathbf5$ in $\sigma_4$. According to Lemma 6, that requires at least $29$ elements. Add to that the four occurrences of the letter $\mathbf5$, the length of $\sigma_{1...4}$ is at least $33$, which contradicts Lemma 9. \\

\noindent \textbf{Case 3:} Suppose either $\mathbf7$ or $\mathbf6$ is the head terminal letter of the reverse segmentation. Without loss of generality, assume it is $\mathbf7$. Then for the same reason as in previous cases, $f(\sigma, \mathbf6) \ge 5$ and $f(\sigma, \mathbf5) \ge 6$. It remains to prove $f(\sigma, \mathbf7) \ge 5$.

If $|\rho_1| = 9$, then a removal operation of $\mathbf7$ from the reverse segmentation will produce a supersequence over a set of seven where $\mathbf 8$ occurs $f(\sigma, \mathbf8)-1$ many times. By Theorem 2, we have $f(\sigma,\mathbf 8) \ge 5$. So a removal operation of $\mathbf8$ from the segmentation will prove $f(\sigma, \mathbf7) \ge 5$, as required.

So, assume $|\rho_1| = 8$ which contains no duplicate. The same reasoning as in Case 2 proves that $\rho_1$ must be strictly a substring of $\sigma_{7...8}$.

If $f(\sigma, \mathbf7) = 4$, then $\mathbf7$ does not occur in $\sigma_{5...6}$. So, any $5$-perm formed from the seven letters in $A \setminus \{\mathbf7\}$ must be generated before the last occurrence of $\mathbf7$ in $\sigma_4$. That requires at least $29$ elements according to Lemma 6. Add to that the three occurrences of $\mathbf7$, there are at least $32$ elements in $\sigma_{1...4}$. But this contradicts the maximum size of $\sigma_{1...4}$ as given by Lemma 9.
\end{proof}

\begin{corollary} \label{corollary3}
If $f(\sigma, \mathbf8) = 4$, then $\mathbf8$ is the head terminal letter of the reverse segmentation.
\end{corollary}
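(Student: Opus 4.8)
The plan is to argue by contradiction: assume $f(\sigma, \mathbf8) = 4$, yet some letter $y \neq \mathbf8$ is the head terminal letter of the reverse segmentation, and derive a contradiction. The first step is to narrow down the candidates for $y$. Every result about the forward segmentation transfers to the reverse segmentation of the reverse supersequence, so the reverse analogue of Lemma~\ref{lem7}(1) gives $|\rho_1| \le 13 - f(\sigma, y)$, where $\rho_1$ is the first reverse segment (here $y$ occurs exactly once in $\rho_1$, being its terminal letter by the reverse of Lemma~\ref{lem2}). Since $\rho_1$ is $1$-complete we also have $|\rho_1| \ge 8$, forcing $f(\sigma, y) \le 5$. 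The improved frequency bounds established above give $f(\sigma, \mathbf1), f(\sigma, \mathbf2), f(\sigma, \mathbf3), f(\sigma, \mathbf4) \ge 6$ and $f(\sigma, \mathbf5) \ge 6$, so none of $\mathbf1, \ldots, \mathbf5$ can play the role of $y$ (this is exactly the reasoning of Corollary~\ref{corollary1}, now applicable to $\mathbf5$ as well); as we have excluded $\mathbf8$ by assumption, this leaves only $y \in \{\mathbf6, \mathbf7\}$.

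The second step treats both remaining candidates at once. For $y \in \{\mathbf6, \mathbf7\}$ we have $f(\sigma, y) \ge 5$ from the improved bounds, so the inequalities $8 \le |\rho_1| \le 13 - f(\sigma, y)$ pin down $f(\sigma, y) = 5$ and $|\rho_1| = 8$. In particular $\rho_1$ carries no duplicate, hence contains each of the eight letters exactly once; in particular it contains $\mathbf8$ exactly once.

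The third step is the removal operation, which is where the contradiction appears. Applying the removal operation of $y$ to the reverse segmentation produces, by Lemma~\ref{lem3} applied to the reverse of $\sigma$, a supersequence $\tau$ over the seven-letter set $A \setminus \{y\}$. This deletes all eight elements of $\rho_1$ together with the remaining four occurrences of $y$, so $|\tau| = 51 - 8 - 4 = 39$, which is exactly the minimum length of a supersequence over seven letters by Theorem~\ref{th1}. Thus $\tau$ is minimum-sized, and Theorem~\ref{th2} forces each of its seven letters to occur at least four times. But $\mathbf8$ occurred four times in $\sigma$ and lost its single copy inside $\rho_1$, so it occurs only $4 - 1 = 3$ times in $\tau$ --- a contradiction. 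Hence $y = \mathbf8$, as claimed.

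The main obstacle I anticipate is the bookkeeping across the reverse removal operation rather than any single hard idea: the argument collapses unless $|\rho_1| = 8$ with no duplicate (guaranteeing $\mathbf8$ sits in $\rho_1$ exactly once) and unless $y$ occupies $\rho_1$ exactly once. Both facts must be extracted carefully from the reverse analogues of Lemma~\ref{lem2} and Lemma~\ref{lem7}, and I would want to state explicitly, before using them, that these analogues are legitimate under the symmetry noted after Definition~\ref{def3}. The uniform collapse of the cases $y = \mathbf6$ and $y = \mathbf7$ into a single count is what makes the argument short once this orientation is fixed.
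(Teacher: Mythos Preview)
Your argument is correct and follows the same route as the paper's proof: rule out $\mathbf1,\ldots,\mathbf5$ via the frequency bounds, then for $y\in\{\mathbf6,\mathbf7\}$ perform the reverse removal operation and invoke Theorem~\ref{th2} on the resulting seven-letter supersequence where $\mathbf8$ appears only three times. Your write-up is in fact more complete than the paper's two-line proof, since you explicitly verify $|\tau|=39$ (so that Theorem~\ref{th2}, which is stated only for minimum-sized supersequences, actually applies) and you spell out that $|\rho_1|=8$ forces $\mathbf8$ to sit in $\rho_1$ exactly once---details the paper leaves implicit.
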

\begin{proof} 
If $\mathbf7$ or $\mathbf6$ is the head terminal letter of the reverse segmentation, a removal operation on it from the reverse supersequence will yield a supersequence over seven letters where $\mathbf8$ occurs only three times, contradicting Theorem 2.
\end{proof}

\section{The Head Terminal Letter}

With previous results of frequencies of each letter, we will now focus on the frequency of $\mathbf8$. Later in the section, we improve on the frequencies of $\mathbf7$, $\mathbf6$ and $\mathbf4$ to arrive at the conclusion of the paper.

\begin{lemma} \label{lem12}
If $f(\sigma, \mathbf8) = 4$, then $f(\sigma, \mathbf6) = 6$.\end{lemma}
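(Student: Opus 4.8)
The plan is to prove the two inequalities $f(\sigma,\mathbf6)\ge 6$ and $f(\sigma,\mathbf6)\le 6$ separately, with the removal operation of $\mathbf8$ as the main engine. Assuming $f(\sigma,\mathbf8)=4$, the corollary following Lemma 11 tells us that $\mathbf8$ is the head terminal letter of the reverse segmentation, which places us in the ``Case 1'' regime of Lemma 11: each of $\mathbf7,\mathbf6,\mathbf5$ occurs in both $\rho_1$ and $\rho_2$, and by Lemma 10 these are substrings of $\sigma_{5\ldots8}$, so in particular $f(\sigma_{5\ldots8},\mathbf6)\ge 2$. By Lemma 7 we have $|\sigma_1|\le 13-4=9$, and by Lemma 4 we have $|\sigma_1|\ge 8$, hence $|\sigma_1|\in\{8,9\}$. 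Performing the removal operation of $\mathbf8$ yields a supersequence $\sigma'$ over the seven letters $A\setminus\{\mathbf8\}$ of length $48-|\sigma_1|\in\{39,40\}$, with $f(\sigma',x)=f(\sigma,x)-f(\sigma_1,x)$ for every $x\ne\mathbf8$; when $|\sigma_1|=9$ this $\sigma'$ is a minimum supersequence, so Lemma 5 pins its frequency multiset to either $7,6,6,5,5,5,5$ or $7,6,6,6,5,5,4$.

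For the lower bound I would assume $f(\sigma,\mathbf6)=5$ and seek a contradiction. Since $\mathbf6\in\sigma_1$ we have $f(\sigma_1,\mathbf6)\ge 1$, hence $f(\sigma',\mathbf6)\le 4$; Theorem 2 forbids a frequency below $4$ over seven letters, so $f(\sigma',\mathbf6)=4$ and $f(\sigma_1,\mathbf6)=1$. Thus $\mathbf6$ would be the unique minimum-frequency letter of $\sigma'$, forcing the distribution $7,6,6,6,5,5,4$. The idea is then to contradict this by re-running the ``appears early and late'' argument of Lemma 11 inside $\sigma'$: the letter $\mathbf6$ still sits in an initial portion of $\sigma'$ (as a high terminal letter) and also in the final reverse segments of $\sigma'$ inherited from $\rho_1,\rho_2$, which should force $f(\sigma',\mathbf6)\ge 5$ and contradict $f(\sigma',\mathbf6)=4$. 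Translating back gives $f(\sigma,\mathbf6)=f(\sigma',\mathbf6)+f(\sigma_1,\mathbf6)\ge 6$.

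For the upper bound I would show $f(\sigma',\mathbf6)\le 5$ together with $f(\sigma_1,\mathbf6)=1$, so that $f(\sigma,\mathbf6)=6$. The bound $f(\sigma',\mathbf6)\le 6$ is immediate because $\mathbf6$ is not the head terminal letter of $\sigma'$ (that role falls to $\mathbf7$, which first appears last among the seven letters at the start of $\sigma'$), so $\mathbf6$ cannot occupy the frequency-$7$ slot. To rule out $f(\sigma',\mathbf6)=6$, I would push the reverse-segmentation accounting: $f(\sigma',\mathbf6)=6$ forces the multiset $7,6,6,6,5,5,4$ with $\mathbf6$ in a frequency-$6$ slot, and counting the forced occurrences of the remaining high-frequency letters against the length $39$, together with the required placement of $\mathbf7,\mathbf6,\mathbf5$ in the reverse segments, should yield an over-count.

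The main obstacle is twofold. First, the classification from Lemma 5 only applies when $\sigma'$ is itself minimum, i.e. when $|\sigma_1|=9$; the subcase $|\sigma_1|=8$ (where $\sigma'$ has length $40$) must be handled separately, most naturally by a second removal operation down to six letters and a further appeal to the frequency floor of $4$ from Theorem 2. Second, and more delicate, is pinning $\mathbf6$ into the frequency-$5$ slot of the minimum seven-letter distribution rather than the $4$-slot or the $6$-slot; this is precisely where the structural facts that $\mathbf6$ occurs in several initial segments and in both reverse segments $\rho_1,\rho_2$ must be converted into matching lower and upper counts on $f(\sigma',\mathbf6)$, and obtaining both inequalities simultaneously is the crux of the argument.
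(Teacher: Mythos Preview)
Your proposal takes a very different route from the paper and contains genuine gaps.

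\textbf{What the paper actually does.} The paper's proof is short and purely combinatorial, with no removal operation at all. It first argues that under $f(\sigma,\mathbf8)=4$ the letter $\mathbf8$ satisfies $\mathbf8\notin\sigma_2$, $\mathbf8\in\sigma_3$, $\mathbf8\notin\sigma_4$ (otherwise $\mathbf8$ would miss two consecutive segments). By the same ``no two consecutive missing segments'' reasoning applied to $\mathbf6$, if $f(\sigma,\mathbf6)=5$ then also $\mathbf6\notin\sigma_4$. With both $\mathbf8$ and $\mathbf6$ absent from $\sigma_4$, one can find a $4$-perm $\alpha$ over $A\setminus\{\mathbf6,\mathbf8\}$ generated at $4$. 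Then one of $\alpha\cdot\mathbf6\cdot\mathbf8$ or $\alpha\cdot\mathbf8\cdot\mathbf6$ is a $6$-perm generated only at the \emph{last} occurrence of $\mathbf6$ or $\mathbf8$ in $\sigma$, which lies in $\rho_1$; appending the remaining two letters then yields an $8$-perm not contained in $\sigma$. That is the whole argument for the lower bound. (The paper's proof in fact establishes only $f(\sigma,\mathbf6)\ge 6$; equality is what is used downstream via the frequency table in Lemma~13, but it is not separately argued.)

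\textbf{Where your plan breaks.} For the lower bound, your intended contradiction is ``$f(\sigma',\mathbf6)=4$ is impossible''. But it is not: Theorem~2 allows frequency~$4$ over seven letters, and Lemma~5 explicitly lists $7,6,6,6,5,5,4$ as a legal distribution---Example~4 exhibits such a supersequence. So you must show $\mathbf6$ cannot be the letter in the $4$-slot, and your ``re-run Lemma~11 inside $\sigma'$'' does not do this: $\mathbf6$ appearing in $\sigma'_1,\sigma'_2$ and once or twice in the tail of $\sigma'$ only forces $f(\sigma',\mathbf6)\ge 4$, which is exactly what you are trying to contradict. You also correctly flag that when $|\sigma_1|=8$ the sequence $\sigma'$ has length $40$ and Lemma~5 does not apply; you offer no replacement. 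For the upper bound, your argument is entirely schematic (``should yield an over-count''), and in any case the paper does not prove or need a separate $\le 6$ step here.

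The missing idea is the paper's: track which \emph{segments} $\mathbf8$ and $\mathbf6$ can occupy under the frequency hypotheses, observe both miss $\sigma_4$, and build a concrete perm that cannot be completed. This avoids the removal machinery entirely.
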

\begin{proof} Suppose $f(\sigma, \mathbf8) = 4$. If $\mathbf8$ occurs in $\sigma_2$, then $\mathbf8$ can only occur once in the remaining four segments $\sigma_{3...6}$, implying it does not occur in two consecutive segments, which is impossible. So $\mathbf8 \notin \sigma_2$ and $\mathbf8 \in \sigma_3$. Also $\mathbf8 \notin \sigma_4$. The same reasoning shows that if $f(\sigma, \mathbf6) = 5$, it does not occur in $\mathbf6 \notin \sigma_4$.
Since both letters $\mathbf6$ and $\mathbf8$ do not occur in $\sigma_4$, it is possible to find a $4$-perm $\alpha$, generated at $4$, without using $\mathbf6$ and $\mathbf8$. One of the $6$-perms $\alpha \cdot \mathbf6 \cdot \mathbf8$ or $\alpha \cdot \mathbf8 \cdot \mathbf6$ is generated at the last occurrence of $\mathbf6$ or $\mathbf8$. This has to be in the first reverse segment $\rho_1$. So appending the remaining two letters from $A$ to the $6$-perm cannot be generated within $\sigma$, which is a contradiction. \end{proof}

The following is the core part of the entire proof.

\begin{lemma} \label{lem13}
The frequency of the letter $\mathbf8$ is $f(\sigma, \mathbf8) \ge 5$.
\end{lemma}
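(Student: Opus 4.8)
The plan is to argue by contradiction: assume $f(\sigma,\mathbf{8})=4$ and show that four copies of $\mathbf{8}$ cannot be interleaved into every permutation of $A$ within total length $51$. First I would collect the structural consequences already in hand. By Lemma 12, $f(\sigma,\mathbf{6})=6$, and by Corollary 3 the letter $\mathbf{8}$ is the head terminal letter of the reverse segmentation. Next I would locate the four occurrences of $\mathbf{8}$, at positions $p_1<p_2<p_3<p_4$. Since $\mathbf{8}=\sigma_1[-1]$, it occurs exactly once in $\sigma_1$, so $p_1=|\sigma_1|\le m_1=13-4=9$. At the other end, every permutation of $A$ that ends in $\mathbf{8}$ must embed its first seven letters before the last copy of $\mathbf{8}$; hence $\sigma[1,p_4-1]$ is $7$-complete over $A\setminus\{\mathbf{8}\}$, so it is a supersequence over seven letters of length at least $39$ (Theorem 1). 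As it also contains the three earlier copies of $\mathbf{8}$, we get $p_4-1\ge 42$, i.e.\ $p_4\ge 43$. The facts $\mathbf{8}\notin\sigma_2$, $\mathbf{8}\in\sigma_3$, $\mathbf{8}\notin\sigma_4$ from Lemma 12, together with the principle that $\mathbf{8}$ cannot be absent from two consecutive segments, place $p_2\in\sigma_3$ and $p_3\in\sigma_5$ and identify $p_4$ with the first element of $\rho_1$.

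The core of the argument is an interleaving count. Every permutation of $A$ is $w_1\cdot\mathbf{8}\cdot w_2$ with $|w_1|=k$ and $|w_2|=7-k$, and to realize it one must embed $w_1$ in some prefix $\sigma[1,p_i-1]$ and $w_2$ in the matching suffix $\sigma[p_i+1,-1]$. Put $t_i=p_i-i$, the number of non-$\mathbf{8}$ elements lying before the $i$th copy of $\mathbf{8}$; the complementary suffix then carries $47-t_i$ non-$\mathbf{8}$ elements. Writing $c(j)$ for the minimum length of a $j$-complete sequence over seven letters, Lemma 6 gives $c(1),\dots,c(5)=7,13,19,24,29$. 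To realize all permutations with $\mathbf{8}$ in a fixed interior slot $k\in\{2,3,4,5\}$ it is sufficient that some $i$ have $t_i\ge c(k)$ and $47-t_i\ge c(7-k)$, i.e.\ that $t_i$ lie in the window $[\,c(k),\,47-c(7-k)\,]$; these four windows are the disjoint intervals $[13,18]$, $[19,23]$, $[24,28]$, $[29,34]$. Since $t_1\le 8$ and $t_4\ge 39$ fall outside every window, only $t_2$ and $t_3$ are available, and two numbers cannot meet four disjoint windows. In skeletal form this is the contradiction: the four copies of $\mathbf{8}$ cannot be positioned so as to slot $\mathbf{8}$ into the interior of all permutations.

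The step I expect to be the main obstacle is that the window condition is sufficient but not obviously necessary: a single permutation $w_1\cdot\mathbf{8}\cdot w_2$ only needs its own $w_1,w_2$ to fit at one common copy of $\mathbf{8}$, so a missed window must be upgraded into an actual unrealizable permutation. If the window for slot $k$ is missed, then for every $i$ either $\sigma[1,p_i-1]$ omits some $k$-perm or $\sigma[p_i+1,-1]$ omits some $(7-k)$-perm; the delicate point is to choose a missing prefix $k$-perm and a missing suffix $(7-k)$-perm on \emph{disjoint} supports, so that together they form one permutation of $A\setminus\{\mathbf{8}\}$ that fails at every copy of $\mathbf{8}$. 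I would handle this by a case analysis on which two of the four windows $t_2,t_3$ miss and on the completeness profiles of $\sigma[1,p_2-1]$, $\sigma[1,p_3-1]$ and their complementary suffixes, selecting the offending letters explicitly in each case; here the fine placement of $\mathbf{8}$ in $\sigma_3$ and $\sigma_5$, the value $f(\sigma,\mathbf{6})=6$, and the location of $\rho_1$ would be used, in the spirit of Lemma 12, to exhibit a perm generated so late (inside $\rho_1$) that its remaining letters have nowhere to appear.
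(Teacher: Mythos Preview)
Your approach is genuinely different from the paper's. The paper argues by a long chain of structural claims about the segments: it pins down that $\mathbf 7\notin\sigma_3$, $\mathbf 7\in\sigma_4$, $\mathbf 8\in\sigma_1,\sigma_3,\sigma_5,\sigma_{7\ldots 8}$, that $\mathbf 8$ precedes $\mathbf 5$ in $\sigma_3$, that the two available duplicates in $\sigma_{1\ldots 3}$ are forced to come one each from $\{\mathbf 5,\mathbf 6,\mathbf 7\}$ and from $\{\mathbf 1,\mathbf 2,\mathbf 3\}$, and that $\mathbf 8$ sits at position $\ge 4$ in $\sigma_5$; it then cuts $\sigma$ at that copy of $\mathbf 8$ and counts that the tail $\sigma^{(2)}$ must carry at least $13+5=18$ elements while having room for at most $17$. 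None of this resembles your window/pigeonhole count.

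The step you yourself flag as the main obstacle is not a detail to be cleaned up by case analysis; it is the entire content of the lemma. ``Window for slot $k$ missed'' tells you only that at each $i$ the prefix before $p_i$ omits \emph{some} $k$-perm or the suffix after $p_i$ omits \emph{some} $(7-k)$-perm. To get a contradiction you must exhibit a single pair $(w_1,w_2)$ on complementary supports in $A\setminus\{\mathbf 8\}$ that fails at every $i$. By the nesting of prefixes and suffixes this reduces to finding $w_1$ missing from the prefix at the threshold index $a$ (the last $i$ with $t_i<c(k)$) and $w_2$ missing from the suffix at $a{+}1$, with disjoint supports. There is no general reason this succeeds: if every $k$-perm absent from the short prefix happens to use some fixed letter $x$, and every $(7{-}k)$-perm absent from the short suffix also uses $x$, no disjoint pair exists and your pigeonhole count yields nothing. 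Ruling out such coincidences forces you straight back into fine positional bookkeeping---which letters occur where in $\sigma_3,\sigma_4,\sigma_5$, where the duplicates sit, how $\mathbf 8$ is placed relative to $\mathbf 5$---which is precisely what the paper's Claims 1--10 do. Your skeleton therefore does not bypass the hard part; it relabels it, and the promised ``case analysis'' would have to reconstruct essentially the paper's argument.
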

\begin{proof} Suppose $f(\sigma, \mathbf8) = 4$. Together with the results of the previous lemma, there are at most two duplicates in $\sigma_{1...4}$. Not counting the duplicates, the following table summarizes the minimum frequencies of all the letters in different segments.

\begin{table}[ht]
\caption{Frequency in Segments without the Two Duplicates} 
\centering 
\begin{tabular}{c c c c c c} 
\hline\hline 
Letter & $\sigma_{1...3}$ & $\sigma_4$ & $\sigma_{1...4}$ & $\sigma_{5...8}$ & $\sigma$ \\ [0.5ex] 
\hline 
$\mathbf8$ & 2 & 0 & 2 & 2 & 4 \\ 
$\mathbf7$ & 2 or 3 & 1 or 0 & 3 & 2 & 5\\
$\mathbf6$ & 3 & 1 or 0 & 4 or 3 & 2 or 3 & 6 \\
$\mathbf5$ & 3 & 1 & 4 & 2 & 6 \\
$\mathbf4$ & 3 & 1 & 4 & 2 & 6 \\
$\mathbf3$ & 3 & 1 & 4 & 3 & 7 \\
$\mathbf2$ & 3 & 1 & 4 & 3 & 7\\
$\mathbf1$ & 3 & 1 & 4 & 4 & 8\\ [1ex] 
\hline 
\end{tabular}
\label{table:nonlin} 
\end{table}

The proof of the lemma is based on the following series of Claims.\\

\noindent \textbf{Claim 1:}  The letter $\mathbf7$ does not occur in $\sigma_3$.
\begin{proof}  Assume the Claim is false. Pick any four letters $\{a, b, c, d\}$ from $A \setminus \{\mathbf7, \mathbf8\}$ that do not have duplicates in $\sigma_{1...3}$. Recall the frequency distribution of letters for supersequences over four letters is $4$, $3$, $3$, $2$. Since $f(\sigma_{1...3}, x) < 4$ for all letters $x \in \{a, b, c, d\}$, the set $\{a, b, c, d\}$ cannot form a supersequence over four letters in $\sigma_{1...3}$. So, there is a $4$-perm formed from letters in this set that is generated at, say position $i$ in $\sigma_4$. The rest of the letters from $A \setminus \{a, b, c, d\}$ must form a supersequence over four letters in $\sigma_4[i+1,-1] \cdot \sigma_{5...8}$. But this is not possible because $f(\sigma_{4...8}, \mathbf7) = 2$ and  $f(\sigma_{4...8},\mathbf 8) = 2$.  \end{proof}

\noindent \textbf{Claim 2:} The letter $\mathbf7$ occurs in $\sigma_4$.
\begin{proof} If the letter $\mathbf7$ does not occur in two consecutive segments $\sigma_3$ and $\sigma_4$, then $\sigma_{1...2}$ must have length at least $7 + 6 + 6 = 19$. Given there are only two duplicates, this is not possible. \end{proof}

\noindent \textbf{Claim 3:} The last element of $\sigma_4$ is either $\mathbf5$ or $\mathbf7$.
\begin{proof} By definition, all other letters occur before $\mathbf5$. \end{proof}

\noindent \textbf{Claim 4:} The letter $\mathbf8 \in \sigma_1, \sigma_3, \sigma_5, \sigma_{7...8}$.
\begin{proof} The letter $\mathbf8$ is obviously in the first and the last ones due to $1$-completeness of the first segment and first reverse segment. If $\mathbf8 \in \sigma_2$, then $\mathbf8$ can only occur once in the remaining four segments $\sigma_{3...6}$, implying it does not occur in two consecutive segments, which is impossible. Same reason it must occur in $\sigma_5$.   \end{proof}

\noindent \textbf{Claim 5:} The letter $\mathbf8$ occurs before $\mathbf5$ in $\sigma_3$.
\begin{proof} Assume the Claim is false. Let $a$, $b$ be two letters from $A \setminus \{\mathbf8,\mathbf7,\mathbf4, \sigma_4[-1]\}$ such that $\langle a, b \rangle$ is generated at $2$. Then the $4$-perm $\langle a, b, \mathbf8, \sigma_4[-1] \rangle$ is generated at the last position of $\sigma_4$. The rest of the letters must form a supersequence over four letters in $\sigma_{5...8}$, but this is not possible because they include letters $\mathbf4$ and either $\mathbf7$ or $\mathbf5$. All these letters have a frequency of two in $\sigma_{5...8}$. \end{proof}

\noindent \textbf{Claim 6:} There is at least one duplicate of a letter from $\{\mathbf5,\mathbf6,\mathbf7\}$ in $\sigma_{1...3}$.
\begin{proof} If not, then it is possible to find a $3$-perm formed by these letters, which is generated at the position of $\mathbf5$ in $\sigma_3$. Since $\mathbf8$ occurs before $\mathbf5$ in $\sigma_3$, appending $\mathbf8$ to this $3$-perm can't be generated at $4$, contradicting the $4$-completeness. \end{proof}

\noindent \textbf{Claim 7:} There is at least one duplicate of a letter from $\{\mathbf1,\mathbf2,\mathbf3\}$ in $\sigma_{1...3}$.
\begin{proof} If not, all letters from $\{\mathbf1,\mathbf2,\mathbf3,\mathbf7\}$ occur strictly less than four times in $\sigma_{1...3}$. So there is a $4$-perm formed from these letters that is generated at $4$. After that, the rest of letters $\{\mathbf4,\mathbf5,\mathbf6,\mathbf8\}$ must also form a supersequence over four letters. But this is not possible because all the letters $\{\mathbf4,\mathbf5,\mathbf6,\mathbf8\}$ also occur strictly less than four times in $\sigma_{4...8}$. \end{proof}

\noindent \textbf{Claim 8:} The position of the letter $\mathbf8$ in $\sigma_5$ is at least four, i.e. $\sigma_5^{-1}[\mathbf8] \ge 4$.
\begin{proof} It is possible to produce a $5$-perm generated at $5$ by avoiding the letter $\mathbf8$. Any such $5$-perm must be generated at a position before $\mathbf8$ in $\sigma_5$ or else appending $\mathbf8$ to it will yield a $6$-perm that can't be generated at $6$, violating the definition of $6$-completeness. Since there are at least three such $5$-perms having different last letters, the position of $8$ is at least four. \end{proof}

Let $\sigma^{(1)}$ be the sequence $\sigma_{1...4} \cdot \sigma_5[1, \sigma_5^{-1}[\mathbf8] ]$, and $\sigma^{(2)}$ be the rest of sequence in $\sigma$. The length of $\sigma_{1...4}$ is $30$ because of the two duplicates. So $|\sigma^{(1)}| \ge 34$ and $|\sigma^{(2)}| \le 17$.\\

\noindent \textbf{Claim 9:} Pick any two letters $a$ and $b$ from $\{\mathbf1,\mathbf2,\mathbf3,\mathbf4,\mathbf6\}$. One of the $3$-perms $\langle a, b, \mathbf7 \rangle$ or $\langle b, a, \mathbf7 \rangle$ is generated at the position of $\mathbf7$ in $\sigma_2$, i.e. $\sigma_2[-1]$.
\begin{proof} There is only at most one duplicate in $\sigma_1$ because $f(\sigma, 8) \ge 4$.\end{proof}

Appending the letters $\mathbf5$ and $\mathbf8$ to any of these $3$-perms from Claim 9 produces a $5$-perm generated at the last element of $\sigma^{(1)}$.\\

\noindent \textbf{Claim 10:} There are at least $13$ occurrences of the letters $\{\mathbf1,\mathbf2,\mathbf3,\mathbf4,\mathbf6\}$ in $\sigma^{(2)}$.
\begin{proof} Pick any two letters from $\{\mathbf1,\mathbf2,\mathbf3,\mathbf4,\mathbf6\}$ and the letters $\mathbf5$ and $\mathbf8$ to produce a $5$-perm generated at the end of $\sigma^{(1)}$. The rest of the three letters must form a supersequence over three letters in $\sigma^{(2)}$. So, there are at least three letters occurring at least thrice and two letters occurring at least twice in $\sigma^{(2)}$. \end{proof}

To derive the final contradiction, note that there are at least two occurrences of the letters $\mathbf5$ and $\mathbf7$ and one occurrence of $\mathbf8$ in $\sigma^{(2)}$. This gives an additional five elements on top of the $13$ from Claim 10. But it exceeds the maximum length of $17$. This completes the proof of $f(\sigma, \mathbf8) \ge 5$.

\end{proof}

Although we already established the letter $\mathbf8$ must occur in $\sigma_4$ from the previous lemma, that is based on the assumption that $f(\sigma, \mathbf8) = 4$. We need to prove it again with $f(\sigma, \mathbf8) = 5$ in the following lemma, but the argument is largely similar.

\begin{lemma} \label{lem14}
If $f(\sigma, \mathbf8) = 5$, the letter $\mathbf8$ occurs in $\mathbf8 \in \sigma_4$.
\end{lemma}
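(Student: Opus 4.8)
The plan is to argue by contradiction: assume $f(\sigma,\mathbf{8})=5$ and $\mathbf{8}\notin\sigma_4$, and recover the same obstruction that made $f(\sigma,\mathbf{8})=4$ impossible in Lemma 13, namely a gap of $\mathbf{8}$ at the fourth segment. First I would locate the five copies of $\mathbf{8}$. Since $\mathbf{8}=L(\sigma_1,A)$ is the head terminal letter we have $\mathbf{8}\in\sigma_1$, and since $\mathbf{8}\in\rho_1$ and $\rho_1$ is a substring of $\sigma_{5...8}$ by Lemma 10, the letter $\mathbf{8}$ also occurs somewhere after $\sigma_4$. Reusing the principle that $\mathbf{8}$ cannot be absent from two consecutive segments (the fact already invoked in Lemmas 4, 12 and 13), the assumed gap $\mathbf{8}\notin\sigma_4$ forces $\mathbf{8}\in\sigma_3$ and $\mathbf{8}\in\sigma_5$, so the remaining occurrences are distributed among $\sigma_2$ and $\sigma_6,\sigma_7,\sigma_8$.

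Next, mirroring the split used in Lemma 13, let $q$ be the position of the last $\mathbf{8}$ in $\sigma_{1...5}$, which lies in $\sigma_5$, and set $\sigma^{(1)}=\sigma|q$ and $\sigma^{(2)}$ equal to the remainder of $\sigma$. The key structural claim is that $\sigma^{(1)}$ with its final $\mathbf{8}$ deleted is $4$-complete over the seven letters of $A\setminus\{\mathbf{8}\}$: for any $4$-perm $\beta$ over $A\setminus\{\mathbf{8}\}$, the $5$-perm $\beta\cdot\mathbf{8}$ must be a subsequence of $\sigma_{1...5}$ by $5$-completeness, and since the only copies of $\mathbf{8}$ in $\sigma_{1...5}$ lie in $\sigma_1,\sigma_3,\sigma_5$, its trailing $\mathbf{8}$ can be matched no later than position $q$, forcing $\beta$ to complete before $q$. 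By Lemma 6 this already gives $|\sigma^{(1)}|\ge 25$, and in the sub-case $\mathbf{8}\notin\sigma_6$ the same argument applied to the $6$-perms $\gamma\cdot\mathbf{8}$ upgrades the conclusion to $5$-completeness over seven letters and hence $|\sigma^{(1)}|\ge 30$.

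Finally I would run the counting endgame of Lemma 13. The perms generated at $\sigma^{(1)}[-1]$ (the analogues of Claims 9 and 10 there) force $\sigma^{(2)}$ to contain a full supersequence over a fixed three- or four-letter subset, which raises the frequencies of several low-index letters inside $\sigma^{(2)}$; adding the obligatory late copies of $\mathbf{5}$, $\mathbf{7}$ and $\mathbf{8}$ then drives $|\sigma^{(2)}|$ above $51-|\sigma^{(1)}|$, the contradiction. I expect the main obstacle to be exactly this accounting. With $f(\sigma,\mathbf{8})=5$ the length budget is tighter than in Lemma 13, and the raised minima $f(\sigma,\mathbf{7})\ge 5$, $f(\sigma,\mathbf{6})\ge 5$ and $f(\sigma,\mathbf{5})\ge 6$ must be carried through every branch, so the bookkeeping has to be redone according to where the fourth and fifth copies of $\mathbf{8}$ fall and, in particular, whether $\mathbf{8}\in\sigma_6$, since that single fact decides whether $\sigma^{(1)}$ is forced to be merely $4$-complete or fully $5$-complete over the seven remaining letters. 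Combining these forced occurrences with the bound $|\sigma_{1...4}|\le 31$ of Lemma 9 should close each case.
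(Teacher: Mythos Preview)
Your plan diverges from the paper's in the choice of pivot, and that choice is where the argument breaks. You split at the copy of $\mathbf 8$ in $\sigma_5$, which on the tail side yields only a \emph{three}-letter supersequence constraint: for every $3$-subset of $A\setminus\{\mathbf 8\}$, its letters must form a supersequence over three in $\sigma^{(2)}$. That forces at most five of those seven letters to occur $\ge 3$ times and two to occur $\ge 2$ times, i.e.\ $f(\sigma^{(2)},A\setminus\{\mathbf 8\})\ge 19$. On the head side, even granting the $4$-complete bound $24$ over seven letters and adding the embedded copies of $\mathbf 8$, you get $|\sigma^{(1)}|\ge 27$. The two together give only $|\sigma|\ge 24+19+f(\sigma,\mathbf 8)=48$ in the branch $\mathbf 8\in\sigma_6$, four short of a contradiction. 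No amount of redistributing the late copies of $\mathbf 5$, $\mathbf 7$, $\mathbf 8$ closes this: those letters are already counted among the seven, and the reverse-segment information you cite from Lemma 10 does not add enough.

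The paper avoids this by pivoting \emph{inside $\sigma_4$} at a letter $t\neq\mathbf 8$. The key preparatory step you are missing is its Claim 1: one first shows that within $\sigma_3$ the single copy of $\mathbf 8$ sits after at least $5-d$ of the five letters of $A\setminus\{\mathbf 8,\mathbf 7,\mathbf 6\}$, where $d\le 2$ is the number of duplicates in $\sigma_{2...3}$. One then lets $t$ be the last element of $\sigma_4$ that either is absent from $\sigma_3$ or precedes $\mathbf 8$ there; for every pair $a,b\in A\setminus\{\mathbf 8,t\}$ one of $\langle a,b,\mathbf 8,t\rangle$, $\langle b,a,\mathbf 8,t\rangle$ is then generated at the position $i=\sigma_4^{-1}[t]$. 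This pushes a \emph{four}-letter supersequence constraint onto the tail $\sigma^{(2)}=\sigma_4[i{+}1,-1]\cdot\sigma_{5...8}$, whose $4,3,3,2$ frequency shape forces three letters of $A\setminus\{\mathbf 8,t\}$ to occur $\ge 4$ times, two more $\ge 3$, and one $\ge 2$ in $\sigma^{(2)}$ (total $\ge 20$); with three further copies of $\mathbf 8$ and two of $t$ one gets $|\sigma^{(2)}|\ge 25$, against $|\sigma^{(1)}|\ge 22+d+(6-d)=28$. Your ``analogues of Claims 9--10'' have no substitute for this $t$-pivot, and without it the endgame you sketch cannot reach $52$.
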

\begin{proof} 
Let $d$ be the number of duplicates in $\sigma_{2...3}$. By assumption, $f(\sigma, \mathbf8) = 5$, $f(\sigma, \mathbf7) = 5$ and $f(\sigma, \mathbf6) = 5$, we know $d$ can be $0$, $1$ or $2$.
\\

\noindent \textbf{Claim 1:} The letter $\mathbf8$ occurs after $5-d$ out of the five letters from the set $A \setminus \{\mathbf8, \mathbf7, \mathbf6\}$ in $\sigma_3$.
\begin{proof} If $\sigma_1[-2] \neq \mathbf 7$, then the $4$-perm $\langle \sigma_1[-2], \mathbf7, a, \mathbf8 \rangle$ cannot be generated at $4$ if $a$ occurs after $\mathbf8$ in $\sigma_4$. So assume $\sigma_1[-2] = \mathbf7$. If $\mathbf6$ is one of the duplicate letters in $\sigma_2$, then any two non-duplicates $a$ and $b$ cannot both occur after $\mathbf8$ in $\sigma_3$ because either $\langle \mathbf7, a, b, \mathbf8 \rangle$ or $\langle \mathbf7, b, a, \mathbf8 \rangle$ cannot be generated at $4$. Finally, if $\mathbf6$ is not a duplicate letter in $\sigma_2$, then for any non-duplicate $a$, either $\langle \mathbf7, \mathbf6, a, \mathbf8 \rangle$ or $\langle \mathbf7, a, \mathbf6, \mathbf8 \rangle$ cannot be generated at $4$. \end{proof}

Let $t$ be the last element of $\sigma_4$ which either does not occur in $\sigma_3$ or occurs before $\mathbf8$ in $\sigma_3$. This is so that whatever $3$-perm $\langle a, b, \mathbf8 \rangle$ is generated at $3$, then the $4$-perm $\langle a, b, \mathbf8, t \rangle$ will be generated at $4$. Let $i = \sigma^{-1}_4[t]$ be the position of letter $t$ in $\sigma_4$. Let $\sigma^{(1)} = \sigma_{1...3} \cdot \sigma_4[1,i]$ and $\sigma^{(2)} = \sigma_4[i+1,-1] \cdot \sigma_{5...8}$ . 
\\

\noindent \textbf{Claim 2:} For any two letters $a, b \in A \setminus \{\mathbf8, t\}$, one of the $4$-perms $\langle a, b, \mathbf8, t \rangle$ or $\langle b, a, \mathbf8, t \rangle$ is generated at $\sigma_4[i]$.
\begin{proof} 
One of the $2$-perms $\langle a, b \rangle$ or $\langle b, a \rangle$ must be generated at $2$ because $\sigma_1$ is $1$-complete and every letter occurs once. Assume it is $\langle a, b \rangle$. Then $\langle a, b, \mathbf 8\rangle$ is generated at $3$ because $\mathbf8$ does not occur in $\sigma_2$. Since $\mathbf8$ occurs after $t$ in $\sigma_3$, the $4$-perms $\langle a, b,\mathbf8, t \rangle$ must be generated as claimed.
\end{proof}

\noindent \textbf{Claim 3:} Suppose $\alpha$ is a $4$-perm, and let $C$ be the set $C = A \setminus \alpha$. If $\alpha$ is generated at $\sigma_4[i]$ for an integer $0 \le i \le |\sigma_4|$, then there is a letter of $C$ that occurs at least four times, two letters that occur at least three times and one letter that occurs at least twice in $\sigma^{(2)}$.
\begin{proof} 
Any $4$-perm $\alpha'$ with letters from $C$ has to be a subsequence of $\sigma^{(2)}$ because $\alpha \cdot \alpha'$ is a permutation, and its first part can be generated at $\sigma_4[i]$. So $\sigma^{(2)}$ is a supersequence over a set of four letters, which must have the claimed frequency distribution.
\end{proof}

\noindent \textbf{Claim 4:} There are three letters in the set $A \setminus \{\mathbf8, t\}$ that occur at least four times, two letters that occur at least three times and one letter that occurs at least twice in $\sigma^{(2)}$. \begin{proof} 
There are a total of six letters in the set $A \setminus \{\mathbf8, t\}$. Suppose there are two or less letters that occur at least four times. Let $C$ be a set that contains any four letters from the rest. By Claim 3, at least one letter of $C$ must occur at least four times, which contradicts the choice of the letters of $C$. A similar argument proves there are at least two letters that occur at least three times.
\end{proof}

\noindent \textbf{Claim 5:} The letter $\mathbf7$ occurs before the terminal letter $\mathbf5$ in $\sigma_4$.
\begin{proof} The proof is exactly the same as the previous claims. Assume $\mathbf7$ occurs after $\mathbf5$ in $\sigma_4$, then its position is at least $6$th in $\sigma_4$. So, there are $22 + d + 6$ elements at or before $\mathbf7$ in $\sigma$, and there are $23 - d$ many elements after $\mathbf7$. Any two letters $a, b \in A \setminus \{\mathbf8, \mathbf7\}$ can form a $4$-perm $\langle a, b, \mathbf8, \mathbf7 \rangle$ or $\langle b, a, \mathbf8, \mathbf7\rangle$ that is generated at the position of $\mathbf7$ in $\sigma_4$. As in Claim 4, there are three letters in the set $A \setminus \{\mathbf8, \mathbf7\}$ that occur at least four times, two letters that occur at least three times and one letter that occurs at least twice after the position of $\mathbf7$ in the sequence. The total number plus the four occurrences of $\mathbf8$ and $\mathbf7$ exceeds the length of the remaining sequence, which is a contradiction.
\end{proof}

So there is a total of at least twenty occurrences of elements from the set $A \setminus \{\mathbf8, t\}$ in $\sigma^{(2)}$. Since $\mathbf8$ has to occur five times in the entire supersequence, but it only occurs in the two segments $\sigma_1$ and $\sigma_2$, it must occur thrice in $\sigma_{5...8}$. Finally the letter $t$ must occur at least twice too. So, the length of $\sigma^{(2)}$ must be at least twenty five. Since $i$ is at least $6-d$, the length of $\sigma^{(1)}$ is at least $22 + d + 6 - d=28$, which contradicts the assumption on the length of $\sigma$.
\end{proof}

Next, we establish the frequencies of letters $\mathbf7$ and $\mathbf6$.

\begin{lemma} \label{lem15}
Given $f(\sigma, \mathbf8) = 5$, the frequency of $\mathbf7$ in $\sigma$ is $6$.
\end{lemma}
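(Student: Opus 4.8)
The plan is to prove the lower bound $f(\sigma,\mathbf7)\ge 6$ by contradiction (Lemma 11 already gives $f(\sigma,\mathbf7)\ge 5$), so I assume $f(\sigma,\mathbf7)=5$ together with the hypothesis $f(\sigma,\mathbf8)=5$ and aim for a length contradiction; the exact value $6$ then follows from a removal argument noted at the end. The first step is to locate $\mathbf7$ and $\mathbf8$. Both letters appear in $\sigma_1$ (since $\sigma_1$ is $1$-complete), and Lemma 14 gives $\mathbf8\in\sigma_4$; an argument in the style of Claims 1--2 of Lemma 13 (using the two-consecutive-segment principle of Lemma 12) gives $\mathbf7\in\sigma_4$ as well, since otherwise $\mathbf7$ would be absent from two consecutive segments among $\sigma_2,\sigma_3,\sigma_4$. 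Because neither letter can skip two consecutive segments and both occur in $\sigma_1$, in $\sigma_4$, and in a reverse segment at the end, I obtain $f(\sigma_{1\ldots4},\mathbf8)\ge3$ and $f(\sigma_{1\ldots4},\mathbf7)\ge2$, hence the crucial caps $f(\sigma_{5\ldots8},\mathbf8)\le2$ and $f(\sigma_{5\ldots8},\mathbf7)\le3$. I write $\mathbf5=\sigma_4[-1]$ for the terminal letter of $\sigma_4$ (the case $\sigma_4[-1]=\mathbf7$ permitted by Claim 3 of Lemma 13 is handled analogously).

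The engine of the contradiction is the generate-a-perm-then-count-the-complement device used throughout Lemmas 13--14. Let $S=\{\mathbf1,\mathbf2,\mathbf3,\mathbf4,\mathbf6\}$. For each pair $\{x,y\}\subseteq S$, I would produce a $4$-perm $\alpha$ over the four letters $\{\mathbf5\}\cup(S\setminus\{x,y\})$, ending in $\mathbf5$, that is generated at the last position $\sigma_4[-1]$; then $A\setminus\alpha=\{\mathbf7,\mathbf8,x,y\}$ must form a supersequence over four letters inside $\sigma_{5\ldots8}$, and by Lemma 5 it realizes the frequency profile $4,3,3,2$ there. Since $f(\sigma_{5\ldots8},\mathbf8)\le2$, $\mathbf8$ must be the ``$2$''; since $f(\sigma_{5\ldots8},\mathbf7)\le3$, $\mathbf7$ must be a ``$3$''; hence $x$ and $y$ realize $\{4,3\}$. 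Ranging over the pairs of $S$ forces every letter of $S$ to occur at least three times in $\sigma_{5\ldots8}$, and shows no two letters of $S$ can both occur exactly three times (their pair would leave no letter to supply the ``$4$''). Thus at least four letters of $S$ occur $\ge4$ times in $\sigma_{5\ldots8}$, so $f(\sigma_{5\ldots8},S)\ge 4\cdot4+3=19$. Adding $f(\sigma_{5\ldots8},\mathbf7)\ge3$, $f(\sigma_{5\ldots8},\mathbf8)=2$, and $f(\sigma_{5\ldots8},\mathbf5)\ge2$ yields $|\sigma_{5\ldots8}|\ge26$, whence $|\sigma_{1\ldots4}|\le25$, contradicting $|\sigma_{1\ldots4}|\ge28$ from Lemma 4.

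The main obstacle is the perm-construction step: verifying that for each relevant pair $\{x,y\}$ there genuinely is a $4$-perm with complement $\{\mathbf7,\mathbf8,x,y\}$ generated exactly at $\sigma_4[-1]$, rather than earlier or only at an interior position of $\sigma_4$ (which would place some of the counted occurrences inside $\sigma_{1\ldots4}$ and weaken the bound). I expect this to require a case analysis on the internal order of $\sigma_4$ and on the number of duplicates in $\sigma_{1\ldots3}$ --- precisely the bookkeeping $d\in\{0,1,2\}$ of Lemma 14 --- together with Lemma 10 to control the reverse segments $\rho_2\cdot\rho_1\subseteq\sigma_{5\ldots8}$. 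Should full coverage of all pairs prove unattainable, the weaker target of forcing four letters of $S$ up to frequency four still suffices for the same length contradiction, so the argument is robust to this technicality.

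Finally, for the upper bound $f(\sigma,\mathbf7)\le6$, I would apply the removal operation of $\mathbf8$ to pass to a supersequence over seven letters and invoke the admissible frequency distributions of Lemma 5, which cap the frequency of $\mathbf7$ in the reduced sequence; combined with the occurrences of $\mathbf7$ removed along with $\sigma_1$, this pins $f(\sigma,\mathbf7)=6$ exactly.
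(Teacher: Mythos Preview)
Your approach differs substantially from the paper's, and the step you yourself flag as ``the main obstacle'' is a genuine gap, not a technicality. The paper's argument is far more direct: assuming $f(\sigma,\mathbf7)=5$, it locates a \emph{single} $4$-perm over letters from $\{\mathbf1,\mathbf2,\mathbf3,\mathbf4\}$ (in Case~1) or $\{\mathbf1,\mathbf2,\mathbf3,\sigma_3[-1]\}$ (in Case~2) that is generated in $\sigma_4$ (respectively at $\sigma_3[-1]$), and then observes that the complementary four letters, which include both $\mathbf7$ and $\mathbf8$, each occur at most three times in the remaining tail and hence cannot form a supersequence over four letters. One perm, one contradiction; no aggregation over pairs is required.

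Your plan instead demands, for \emph{every} pair $\{x,y\}\subseteq S$, a $4$-perm ending at $\sigma_4[-1]$ and avoiding $\{\mathbf7,\mathbf8,x,y\}$. Concretely this means finding, for each triple $\{a,b,c\}=S\setminus\{x,y\}$, an ordering whose greedy embedding in $\sigma_{1\ldots4}$ terminates at or after the last $\mathbf5$ in $\sigma_{1\ldots3}$ (so that appending $\mathbf5$ forces the use of $\sigma_4[-1]$). You do not construct these perms; you only ``expect'' a case analysis to furnish them. Since the entire frequency aggregation downstream rests on having these perms for all (or at least most) pairs, the argument as written is incomplete. There are also smaller slips: Lemma~5 gives the $4,3,3,2$ profile only for \emph{minimum-length} supersequences over four letters, so from ``$\{\mathbf7,\mathbf8,x,y\}$ is a supersequence in $\sigma_{5\ldots8}$'' you can only conclude that one of $x,y$ has frequency $\ge4$ (since all four at $\le3$ would give total $\le11<12$), not that $x,y$ realize $\{4,3\}$; and your final tally uses $f(\sigma_{5\ldots8},\mathbf7)\ge3$, whereas you only established $\le3$ (from $\rho_1,\rho_2$ you get $\ge2$). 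These slips happen not to break the arithmetic --- four letters of $S$ at $\ge4$ plus one at $\ge2$, together with $2+2+2$ for $\mathbf5,\mathbf7,\mathbf8$, still gives $|\sigma_{5\ldots8}|\ge24>51-28$ --- but the unfilled perm-construction step does.
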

\begin{proof} Using the argument of Claim 1 of Lemma 13, we establish that $\mathbf7 \notin \sigma_3$ and $\mathbf7 \in \sigma_4$.  So, either $\mathbf7 \notin \sigma_5$ or $\mathbf7 \notin \sigma_6$ is true. We also know either $\mathbf8 \notin \sigma_5$ or $\mathbf8 \notin \sigma_6$. Without loss of generality, we assume $\mathbf8 \notin \sigma_5$ and $\mathbf7 \notin \sigma_6$. The proof is similar for the other cases. Since $f(\sigma, \mathbf8) = 5$, there is no duplicate in $\sigma_1$. 

The major steps of the proof are similar to Lemma 13. The following Cases outline the major steps to arrive at the conclusion. \\

\noindent \textbf{Case 1:} There is no duplicate from a letter of $\{\mathbf1,\mathbf2,\mathbf3,\mathbf4\}$ in $\sigma_2$ or $\sigma_3$. Since all letters of $\{\mathbf1,\mathbf2,\mathbf3,\mathbf4\}$ occur three times in $\sigma_{1...3}$, they cannot form a supersequence over four letters. Since there is a $4$-perm of these letters generated at $4$. The other four letters also cannot form a supersequence over four letters because they occurs at most three times in $\sigma_{4...8}$. \\

\noindent \textbf{Case 2:} There is a duplicate from a letter of $\{\mathbf1,\mathbf2,\mathbf3,\mathbf4\}$ in $\sigma_2$ or $\sigma_3$. Without loss of generality, assume $\mathbf3$ is a duplicate in $\sigma_2$. There is a $4$-perm from the letters of $\{\mathbf1,\mathbf2,\mathbf3,\sigma_3[-1]\}$ that is generated at the last position of $\sigma_3$. The rest of the four letters cannot form a supersequence over four letters after that position because they occur at most three times in $\sigma_{4...8}$.
\end{proof}


With the established results for letters $\mathbf7$ and $\mathbf8$, we get the frequency of $\mathbf6$ through removal operation.

\begin{corollary} \label{corollary3}
If $f(\sigma, \mathbf8) = 5$ and $f(\sigma, \mathbf7) = 6$, then $f(\sigma, \mathbf6) = 6$.
\end{corollary}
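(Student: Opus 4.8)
The plan is to apply the removal operation twice, peeling off $\mathbf8$ and then $\mathbf7$, in order to descend to a minimum-sized supersequence over six letters, where Theorem 2 pins down the frequency of the head terminal letter exactly.

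First I would record that $f(\sigma,\mathbf8)=5$ forces $|\sigma_1|=8$: item 1 of Lemma 7 gives $|\sigma_1|\le 13-5=8$, while $1$-completeness (Lemma 4) gives $|\sigma_1|\ge 8$. Hence $\sigma_1$ is a permutation of $A$ and every letter occurs exactly once in it. Performing the removal operation of $\mathbf8$ deletes $|\sigma_1|+f(\sigma,\mathbf8)-1=8+5-1=12$ elements, so by Lemma 3 the result $\sigma'$ is a supersequence over the seven letters $A\setminus\{\mathbf8\}$ of length $51-12=39$, which is the minimum of Theorem 1; thus $\sigma'$ is a minimum-sized supersequence over seven letters. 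Since $\mathbf8$ first appears at the end of $\sigma_1$ (it is the terminal letter of $\sigma_1$), we have $p_8=|\sigma_1|$, so by Definition 7 the letter $\mathbf7=L(\sigma[p_8+1,-1],A\setminus\{\mathbf8\})$ is exactly the last-appearing letter of $\sigma'$, that is, the head terminal letter of $\sigma'$. Moreover $f(\sigma',\mathbf7)=6-1=5$ and $f(\sigma',\mathbf6)=f(\sigma,\mathbf6)-1$.

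Next I would repeat the operation on $\sigma'$. The head terminal $\mathbf7$ occupies the last position of the first segment $\sigma'_1$ and occurs only once there, so removing $\mathbf7$ deletes $|\sigma'_1|+5-1$ elements. Because the outcome is a supersequence over six letters (hence of length $\ge 28$) and $\sigma'_1$ must be $1$-complete (hence of length $\ge 7$), this forces $|\sigma'_1|=7$ and a resulting length of exactly $28$; so $\sigma''$ is a minimum-sized supersequence over the six letters $A\setminus\{\mathbf8,\mathbf7\}$, with $\mathbf6$ as its head terminal (again by the appearance order of Definition 7), and $f(\sigma'',\mathbf6)=f(\sigma',\mathbf6)-1=f(\sigma,\mathbf6)-2$.

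The concluding step is to show that the head terminal of any minimum-sized supersequence over six letters has frequency exactly $4$. Applying the removal operation to such a sequence (length $28$) yields a supersequence over five letters of length $\ge 19$ (Theorem 1), so the number of deleted elements is at most $9$; since the first segment has length $6$, the head terminal occurs at most $9-6+1=4$ times. Theorem 2 gives the matching lower bound of $4$, so $f(\sigma'',\mathbf6)=4$ and therefore $f(\sigma,\mathbf6)=4+2=6$. I expect the main obstacle to be the bookkeeping that certifies $\mathbf7$ and $\mathbf6$ remain the head terminal letters after each removal, that is, checking that the removal operation, which strips the entire first segment and all copies of the removed letter, preserves the first-appearance order underlying Definition 7, together with confirming at each stage that the first segment is a genuine permutation so that exactly one copy of $\mathbf6$ is discarded per removal.
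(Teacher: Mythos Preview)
Your approach is correct and is essentially the same as the paper's: perform two removal operations to reach a minimum-length supersequence over six letters where $\mathbf6$ has frequency $f(\sigma,\mathbf6)-2$, invoke Theorem~2 for the lower bound $\ge 4$, and use a third removal together with Theorem~1 for the upper bound $\le 4$. Your version is in fact more carefully justified than the paper's terse proof, since you explicitly verify that each intermediate supersequence is minimum-sized and that $\mathbf7$ and then $\mathbf6$ become the successive head terminals (the paper leaves these points implicit); the only cosmetic slip is writing ``the first segment has length~$6$'' for $\sigma''$ where you need only $|\sigma''_1|\ge 6$, which actually makes the bound you derive no weaker.
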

\begin{proof} Two successive removal operations will produce a supersequence over six letters where the letter $\mathbf6$ occurs $f(\sigma, \mathbf6) - 2$ times. By Theorem 2, this number is $f(\sigma, \mathbf6) - 2 \ge 4$. It also cannot be strictly greater than four, because otherwise, another removal operation will produce a supersequence over five letters that has length smaller than minimum $19$.
\end{proof}

Finally, we prove that the letter $\mathbf4$ at the tail end of the supersequence needs to be increased to generate some permutations.

\begin{lemma} \label{lem16}
If $f(\sigma, \mathbf8) = 5$, the frequency of $\mathbf4$ in $\sigma$ is $7$. \end{lemma}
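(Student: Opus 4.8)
The plan is to argue by contradiction, assuming $f(\sigma, \mathbf4) = 6$, and to reuse the bookkeeping strategy of Lemma 13. First I would feed the now-established values $f(\sigma,\mathbf8)=5$, $f(\sigma,\mathbf7)=6$, $f(\sigma,\mathbf6)=6$ into the length identity: since $|\sigma|=51$, the remaining letters satisfy $f(\sigma,\mathbf5)+f(\sigma,\mathbf4)+f(\sigma,\mathbf3)+f(\sigma,\mathbf2)+f(\sigma,\mathbf1)=34$, so under the hypothesis $f(\sigma,\mathbf4)=6$ we get $f(\sigma,\mathbf5)+f(\sigma,\mathbf3)+f(\sigma,\mathbf2)+f(\sigma,\mathbf1)=28$. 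As each of these is at least $6$, only four ``excess'' occurrences may be spread among them, which tightly caps the number of duplicates available in $\sigma_{1\ldots4}$. As in Lemma 13 I would record the forced minimum per-segment frequencies in a table: each of $\mathbf4,\mathbf3,\mathbf2,\mathbf1$ occurs once in each of $\sigma_1,\sigma_2,\sigma_3,\sigma_4$, and the counting establishing $f(\sigma,\mathbf4)\ge 6$ together with Lemma 10 places the two remaining occurrences of $\mathbf4$ in $\rho_2\cdot\rho_1\subseteq\sigma_{5\ldots8}$, i.e. near the very tail of $\sigma$.

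The heart of the argument is then a permutation-generation obstruction in the tail. Since the distribution of a minimum supersequence over four letters is $4,3,3,2$ by Lemma 5, whenever $\alpha$ is a $4$-perm generated at some position, the four complementary letters $A\setminus\alpha$ must, after that position, contain a letter occurring at least $4$ times, two occurring at least $3$ times, and one occurring at least $2$ times. My aim is to produce a $4$-perm $\alpha$ avoiding $\mathbf4$ and generated late enough --- past the earlier of the two tail occurrences of $\mathbf4$ --- so that $\mathbf4\in A\setminus\alpha$ yet only a single occurrence of $\mathbf4$ survives after the generation point. This contradicts the required minimum of $2$ for every letter of a four-letter supersequence, forcing a seventh occurrence of $\mathbf4$; the length identity above then pins $f(\sigma,\mathbf4)$ to exactly $7$.

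To set up such an $\alpha$ I would argue, in the style of Claims 5--9 of Lemma 13, that the scarcity of duplicates forces $\mathbf4$ and the other spanning letters into predictable positions. Concretely, I would locate the first tail occurrence of $\mathbf4$ inside $\rho_2$, split $\sigma$ at that point, and show that the letters already consumed before it generate a suitable $4$-perm over $A\setminus\{\mathbf4,\ldots\}$ while $\mathbf4$ is withheld for its single surviving $\rho_1$ occurrence. The main obstacle is precisely this position control: I must rule out the benign possibility that $\mathbf4$ plays the frequency-$2$ role in the tail by showing its two occurrences are too far apart, which requires a case analysis on the head terminal letter of the reverse segmentation and on which single letter among $\mathbf5,\mathbf3,\mathbf2,\mathbf1$ carries a duplicate in $\sigma_{1\ldots4}$. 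Once the generation point is shown to fall strictly between the two tail occurrences of $\mathbf4$, the length count closes exactly as in Lemma 13.
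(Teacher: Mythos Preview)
Your plan is only an outline, and the part you yourself flag as ``the main obstacle'' --- pinning the generation point of a $4$-perm strictly between the two tail occurrences of $\mathbf4$ --- is never carried out. That positional control is genuinely delicate: nothing in your bookkeeping forces any particular $4$-perm over $A\setminus\{\mathbf4,\ldots\}$ to be generated past the first tail occurrence of $\mathbf4$, and the case analysis you sketch (on the reverse head terminal and on which of $\mathbf5,\mathbf3,\mathbf2,\mathbf1$ is duplicated) is not begun. So as written the proposal has a real gap at its central step.

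More importantly, the paper's argument is far shorter and avoids this obstacle entirely. Rather than a $4$-perm with a four-letter complement (forcing you to verify a $4,3,3,2$ distribution), the paper uses a \emph{longer} perm with a \emph{smaller} complement, so the tail constraint becomes almost trivial. Concretely: from $f(\sigma,\mathbf8)=5$ and the segment structure, either $\mathbf8\notin\sigma_5$ or $\mathbf8\notin\sigma_6$. In the first case the $5$-perm $\langle\mathbf3,\mathbf2,\mathbf1,\mathbf7,\mathbf6\rangle$ is generated at $5$, and the three remaining letters $\{\mathbf4,\mathbf5,\mathbf8\}$ each occur only twice in $\sigma_{5\ldots 8}$, too few for a supersequence over three letters. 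In the second case the $6$-perm $\langle\mathbf3,\mathbf2,\mathbf1,\mathbf7,\mathbf6,\mathbf5\rangle$ is generated at $6$, and $\{\mathbf4,\mathbf8\}$ each occur only once in $\sigma_{6\ldots 8}$, too few for a supersequence over two letters. This two-case, two-line argument replaces all of your proposed table-building and case analysis; the lesson is that pushing the perm length up to $5$ or $6$ collapses the complementary-supersequence obstruction to something you can read off directly from the known frequencies.
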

\begin{proof} Suppose $f(\sigma, \mathbf4) = 6$. If $\mathbf8 \notin \sigma_5$, then the $5$-perm $\langle \mathbf3,\mathbf2,\mathbf1,\mathbf7,\mathbf6 \rangle$ is generated at $5$. The remaining three letters $\{\mathbf4,\mathbf5,\mathbf8\}$ all occur twice in $\sigma_{5...8}$. So, they cannot form a supersequence over three letters. If $\mathbf8 \notin \sigma_6$, then the $6$-perm $\langle \mathbf3,\mathbf2,\mathbf1,\mathbf7,\mathbf6,\mathbf5 \rangle$ is generated at $6$. The remaining two letters $\{\mathbf4,\mathbf8\}$ both occur once in $\sigma_{6...8}$. So they cannot form a supersequence over two letters.
\end{proof}

\begin{theorem} \label{th3}
The minimum length of a supersequence over eight letters is $52$. \end{theorem}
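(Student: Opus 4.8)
The plan is to assume, as set up at the start of Section 4, that $|\sigma| = 51$ and to extract a contradiction from the frequency data built up in the preceding lemmas. The first step is to pin $f(\sigma, \mathbf 8)$ exactly. Lemma 13 gives $f(\sigma, \mathbf 8) \ge 5$, while item 1 of Lemma 7 gives $|\sigma_1| \le m_1 = 13 - f(\sigma, \mathbf 8)$; since $\sigma_1$ is $1$-complete over eight letters we also have $|\sigma_1| \ge 8$, so $f(\sigma, \mathbf 8) \le 5$. Hence $f(\sigma, \mathbf 8) = 5$ is forced, and there is no separate high-frequency branch to dispose of. Feeding this into Lemma 15, Corollary 3 and Lemma 16 then pins the front of the distribution exactly, namely $f(\sigma, \mathbf 7) = f(\sigma, \mathbf 6) = 6$ and $f(\sigma, \mathbf 4) = 7$, while Lemma 11 supplies $f(\sigma, \mathbf 5), f(\sigma, \mathbf 3), f(\sigma, \mathbf 2), f(\sigma, \mathbf 1) \ge 6$.

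Next I would exploit the reverse symmetry emphasized after Definition 3. Since $\mathbf 8$ is now the \emph{only} letter with fewer than six occurrences, the argument of Corollary 1 (which excludes any letter of frequency at least six from being a head terminal, as it would force a first segment of length at most $13-6 = 7 < 8$) forces $\mathbf 8$ to be the head terminal of the reverse segmentation as well. Every result of this section therefore transfers verbatim to the reverse supersequence: the reverse analogues of Corollary 3 and Lemma 16 pin a second cluster of tail frequencies, yielding a reverse-terminal letter of frequency $7$ and two further letters of frequency exactly $6$, all read from the end. Establishing that the reverse-forced frequency-$7$ letter is genuinely distinct from $\mathbf 4$ is part of this step and supplies an additional unit toward the total.

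The main obstacle is that these frequency constraints are not quite sufficient on their own: the forced minimums sum only to roughly $49$, so a naive count still admits distributions totalling $51$. The real work, which I expect to be the hard part, is a structural argument in the spirit of the core Lemma 13 rather than mere counting. I would split $\sigma$ at the generating position of a carefully chosen $4$-perm or $5$-perm lying in the middle segments $\sigma_4, \sigma_5$, where the forward and reverse constraints collide and where $\mathbf 8$ is pinned by Lemma 14 and Claim 4 of Lemma 13. The tail must then be a supersequence over the three or four residual letters, and, using the segment-length control of Lemmas 7 and 9 together with the fixed positions of $\mathbf 8$ and $\mathbf 7$, I would show that the minimum number of occurrences demanded of those residual letters in the tail exceeds the available budget of $51 - |\sigma_{1\ldots 4}|$ elements. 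This forces one of the letters currently bounded only by $\ge 6$ to carry an extra occurrence, pushing the total frequency to at least $52$ and contradicting $|\sigma| = 51$. Combined with the existence of an explicit supersequence of length $52$, this yields that the minimum length over eight letters is exactly $52$.
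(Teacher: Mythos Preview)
Your first paragraph is exactly right and matches the paper: from Lemma~13 and Lemma~7 you get $f(\sigma,\mathbf 8)=5$, and then Lemmas~15, 16 and Corollary~3 pin $f(\sigma,\mathbf 7)=f(\sigma,\mathbf 6)=6$ and $f(\sigma,\mathbf 4)\ge 7$. The divergence begins immediately after.

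You quote only the weak bounds $f(\sigma,\mathbf 1),f(\sigma,\mathbf 2),f(\sigma,\mathbf 3)\ge 6$ from the first lemma of Section~4, and this is what creates the apparent shortfall of three that you then try to close with a reverse-symmetry argument and an unspecified structural split ``in the spirit of Lemma~13''. But the paper already has sharper baseline bounds available. Definition~7 forces $\mathbf 1$ to occur once in each of the eight disjoint intervals $(0,p_8],(p_8,p_7],\dots,(p_2,|\sigma|]$, so $f(\sigma,\mathbf 1)\ge 8$; likewise $f(\sigma,\mathbf 2)\ge 7$. Together with $f(\sigma,\mathbf 3)\ge 7$ (recorded in the $\sigma$-column of Table~1 inside the proof of Lemma~13, using the reverse segments $\rho_1,\rho_2$ sitting inside $\sigma_{5\dots 8}$ by Lemma~10), the total is already
\[
5+6+6+6+7+7+7+8=52,
\]
contradicting $|\sigma|=51$. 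That one line \emph{is} the paper's proof of Theorem~3 in the case $f(\sigma,\mathbf 8)=5$; the case $f(\sigma,\mathbf 8)\ge 6$ you have already correctly excluded.

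So the genuine gap in your proposal is not a wrong idea but a missed observation: you overlook that the labelling of Definition~7 by itself gives $f(\sigma,\mathbf 1)\ge 8$ and $f(\sigma,\mathbf 2)\ge 7$, and that Table~1 supplies $f(\sigma,\mathbf 3)\ge 7$. Your proposed ``hard part'' (splitting at a pivotal $4$- or $5$-perm and bounding the tail) is never needed, and since you leave it as a sketch (``I would split\dots'', ``I would show\dots'') the proposal as written does not actually close the argument.
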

\begin{proof} If $f(\sigma, \mathbf8) = 5$, then $f(\sigma, \mathbf4) = 7$. So the length of $\sigma$ is $52$. If $f(\sigma, \mathbf8) = 6$, then a removal operation, which deletes $13$ elements, will produce a supersequence over seven letters with minimum length of $39$. So the length of $\sigma$ is also $52$.   \end{proof}

\section{Afterthought}

The lower bound we know today is very close to the best supersequence Tan \cite{Tan} constructed in 2022, but there is still a gap. By studying the minimum length of a supersequence over a set of eight letters, we attempt to analyze the internal structure of a supersequence through the segmentation and the frequency distribution of terminal letters. This knowledge will help facilitate narrowing the range of the lower bound asymptotically.  

A common theme of the paper is to seek a pivotal position within the supersequence, and use the frequencies of letters in both ends of the position to limit the lengths of different substrings. The restriction is further utilized to gain knowledge about the frequency distribution, thereby bootstrapping to the next stage of development. However, this strategy fails when generalized to larger sets. The bounds to the lengths become too large to be useful, and worse, the recursion through removal operation yields less information with large $n$.

To find the optimal solution and close the gap, we probably need to study skippable letters of each segment and use some higher degree quasi-symmetry similar to the quasi-palindrome property. This will enable restricting the possible letter frequencies not just from both ends, but also deeper within each segment.



\end{document}